\definecolor{darkergreen}{rgb}{0.0, 0.5, 0.0}
\numberwithin{equation}{section}
\newcommand{\be}{\begin{eqnarray}}
\newcommand{\ee}{\end{eqnarray}}
\newcommand{\ce}{\begin{eqnarray*}}
\newcommand{\de}{\end{eqnarray*}}
\newtheorem{theorem}{Theorem}[section]
\newtheorem{lemma}[theorem]{Lemma}
\newtheorem{remark}[theorem]{Remark}
\newtheorem{definition}[theorem]{Definition}
\newtheorem{proposition}[theorem]{Proposition}
\newtheorem{Examples}[theorem]{Example}
\newtheorem{corollary}[theorem]{Corollary}
\def\proj{\mathbf{p}}
\def\Re{{\mathrm{Re}}}
\def\u{\mathbf{u}}
\def\[{{\Big[}}
\def\]{{\Big]}}
\def\<{{\langle}}
\def\>{{\rangle}}
\def\({{\Big(}}
\def\){{\Big)}}
\def\bx{{\mathbf{x}}}
\def\tr{\mathrm {tr}}
\def\dif{{\mathord{{\rm d}}}}
\def\min{{\mathord{{\rm min}}}}
\def\no{\nonumber}
\def\={&\!\!=\!\!&}
 \newcommand{\eqdef}{\stackrel{\mbox{\tiny def}}{=}}
\def\cI{{\mathcal I}}
\def\cP{{\mathcal P}}
\def\cQ{{\mathcal Q}}
\def\cS{{\mathcal S}}
\def\mD{{\mathbb D}}
\def\mE{{\mathbb E}}
\def\mM{{\mathbb M}}
\def\mN{{\mathbb N}}
\def\mS{{\mathbb S}}
\def\mT{{\mathbb T}}
\def\mZ{{\mathbb Z}}
\def\1{{\mathbf{1}}}
\def\E{\mathbf E}
\def\geq{\geqslant}
\def\leq{\leqslant}
\def\le{\leqslant}
\newcommand{\fs}{\mathbb{S}}
\newcommand{\fd}{\mathbb{D}}
\newcommand{\ftw}{\mathbb{T}}
\newcommand{\fst}{\mathbb{M}}
\newcommand{\fex}{\mathbb{E}}
\def\E{\mathbf{E}}
\def\CP{\mathcal{P}}
\def\Tr{\mathrm{Tr}}
\def\q{\mathfrak{q}}
\def\Re{{\mathrm{Re}}}
\def\u{\mathbf{u}}
\def\[{{\Big[}}
\def\]{{\Big]}}
\def\<{{\langle}}
\def\>{{\rangle}}
\def\({{\Big(}}
\def\){{\Big)}}
\def\bx{{\mathbf{x}}}
\def\tr{\mathrm {tr}}
\def\dif{{\mathord{{\rm d}}}}
\def\min{{\mathord{{\rm min}}}}
\def\tr{{\rm Tr}}
\def\no{\nonumber}
\def\={&\!\!=\!\!&}
\def\bt{\begin{theorem}}
\def\et{\end{theorem}}
\def\bl{\begin{lemma}}
\def\el{\end{lemma}}
\def\br{\begin{remark}}
\def\er{\end{remark}}
\def\bx{\begin{Examples}}
\def\ex{\end{Examples}}
\def\bd{\begin{definition}}
\def\ed{\end{definition}}
\def\bp{\begin{proposition}}
\def\ep{\end{proposition}}
\def\bc{\begin{corollary}}
\def\ec{\end{corollary}}
\def\so{\mathfrak{so}}
\def\su{\mathfrak{su}}
\def\u{\mathfrak{u}}
\def\mfg{\mathfrak{g}}
\def\geq{\geqslant}
\def\leq{\leqslant}
\def\le{\leqslant}
\def\R{\mathbb R}
\def\C{\mathbb C}
\def\<{\langle} \def\>{\rangle}
\begin{document}

\subjclass[2010]{60H15; 35R60; 81T13; 81T25}
\keywords{}

\date{\today}


\title{A new derivation of the finite $N$ master loop equation for lattice Yang-Mills}

\author{Hao Shen}
\address[H. Shen]{Department of Mathematics, University of Wisconsin - Madison, USA}
\email{pkushenhao@gmail.com}

\author{Scott A. Smith}
\address[S. A. Smith]{Academy of Mathematics and Systems Sciences, Chinese Academy of Sciences, Beijing, China
}
\email{ssmith@amss.ac.cn}

\author{Rongchan Zhu}
\address[R. Zhu]{Department of Mathematics, Beijing Institute of Technology, Beijing 100081, China 
}
\email{zhurongchan@126.com}

\maketitle

\begin{abstract}
We give a new derivation of the finite $N$ master loop equation for  lattice Yang-Mills theory with structure group $SO(N)$, $U(N)$ or $SU(N)$.
The $SO(N)$ case was initially proved by Chatterjee in \cite{Cha},
and $SU(N)$ was analyzed in a follow-up work by Jafarov \cite{Jafar}.
  Our approach is based on the Langevin dynamic, an SDE on the manifold of configurations, and yields a simple proof via It\^o's formula.
\end{abstract}

\section{Introduction}

Let  $G$ be the Lie group
$SO(N)$, $U(N)$, or $SU(N)$. The goal of this paper is to derive the finite $N$ master loop equation
for the lattice Yang-Mills theory with gauge group $G$. This was first obtained in  \cite{Cha} for $G=SO(N)$ and then in \cite{Jafar} for  $G=SU(N)$.

We recall the setup, closely following the notation in \cite{Cha}.
Let $\Lambda$ be a finite subset of $\mZ^d$.
We recall that a lattice edge is positively oriented if the beginning point is smaller in lexographic order than the ending point.
Let $E^+$ (resp. $E^-$) be the set of positively (resp. negatively) oriented edges,
and  denote by $E_\Lambda^+$, $E_\Lambda^-$ the corresponding subsets
of edges with both beginning and ending points in $\Lambda$. Define $E\eqdef E^+\cup E^-$ and let $u(e)$ and $v(e)$ denote the starting point and ending point of an edge $e\in E$, respectively. 
A path $\rho$ in the lattice $\mZ^d$ is defined to be a sequence of edges $e_1e_2\cdots e_n$ with $e_i\in E$ and $v(e_i)=u(e_{i+1})$ for $i=1,2,\cdots, n-1$. The path $\rho$ is called closed if $v(e_n)=u(e_1)$.  A plaquette is a closed path of length four which traces out the boundary of a square; more precisely it is non-backtracking in the sense of  \cite[Sec.~2]{Cha}.   The  set of plaquettes is denoted as $\cP$ and $\CP_\Lambda$ is the set of plaquettes whose vertices are all in $\Lambda$, and
$\CP^+_\Lambda$ is the subset of plaquettes $p=e_1e_2e_3e_4$ such that
the beginning point of $e_1$ is lexicographically the smallest among all the vertices in $p$ and the ending point of $e_1$ is the second smallest.

The lattice Yang-Mills theory (or lattice gauge theory)
on $\Lambda$ for the gauge group $G$, with $\beta\in\R$ the inverse coupling constant, is the
  probability measure $\mu_{\Lambda, N, \beta}$  on the set of all collections $Q = (Q_e)_{e\in E_\Lambda^+}$ of $G$-matrices, defined as
\begin{equation}\label{measure}
\dif\mu_{\Lambda, N, \beta}(Q)
 := Z_{\Lambda, N,\beta}^{-1}
\exp\biggl(N\beta \, \Re \sum_{p\in \CP^+_\Lambda} \Tr(Q_p)\biggr) \prod_{e\in E^+_\Lambda} \dif\sigma_N(Q_e)\, ,
\end{equation}
where $Z_{\Lambda,  N, \beta}$ is the normalizing constant, $Q_p \eqdef Q_{e_1}Q_{e_2}Q_{e_3}Q_{e_4}$ for a plaquette $p=e_1e_2e_3e_4$, and $\sigma_N$ is the Haar measure on $G$.
Note that for $p\in \CP^+_\Lambda$ the edges $e_{3}$ and $e_{4}$ are negatively oriented,
so throughout the paper we define $Q_{e}\eqdef Q_{e^{-1}}^{-1}$ for $e \in E^{-}$, where $e^{-1}$ denotes the edge with orientation reversed.
Also, $\Re$ is the real part, which can be omitted when $G=SO(N)$.  We do not intend to further discuss the background and motivation for the above model \eqref{measure}; instead we refer to the review paper \cite{Chatterjee18}.

 For a closed path $\rho=e_1\cdots e_n$, $\rho'$ is said to be cyclically equivalent to $\rho$ if $\rho'=e_ie_{i+1}\cdots e_ne_1e_2\cdots e_{i-1}$ for some $2\leq i\leq n$.  Cyclical equivalence classes are referred to as cycles, and a cycle with no backtracking is called a loop and denoted by $l$. By \cite[Lemma 2.1]{Cha}, for any cycle $l$  there is a unique loop denoted as $[l]$ by successive backtrack erasures until there are no more backtracks. A loop sequence  $s=(l_{1},\dots,l_{m})$ is a collection of loops; more precisely it is an equivalence class understood up to a insertion and deletion of a null cycle (think of $ee^{-1}$ for instance).  The length of a loop $l$ is denoted by $|l|$.  For a loop sequence $s$ with minimal representation $(l_1,\dots, l_n)$, the length is defined as
 \begin{equation}
 |s|\eqdef \sum_{i=1}^n|l_i| . \label{eq:DefS}
 \end{equation}
 We refer to  \cite[Sec.~2]{Cha} for precise definitions of  loop sequence, minimal representation,  location, etc.

Given a loop $ l  = e_1 e_2 \cdots e_n$, the Wilson loop variable $W_l$ is defined as
$$
W_l = \Tr (Q_{e_1}Q_{e_2}\cdots Q_{e_n})\;.
$$
By cyclic invariance of the trace, this definition is independent of the particular representative chosen in the equivalence class $l$. Write $\E$  
for expectation with respect to \eqref{measure}. For any non-null loop sequence $s$ with minimal representation $(l_1,\ldots,l_m)$ such that each $l_i$ is contained in $\Lambda$, define
\begin{equ}[e:phi]
W_s\eqdef W_{l_1}W_{l_2}\cdots W_{l_m},\quad \phi(s) \;\eqdef \;\E \frac{W_s}{N^m}\;.
\end{equ}
 The master loop equation is a recursion which expresses $\phi(s)$ in terms of a linear combination of $\phi(s')$, where $s'$ is a loop sequence obtained by performing an operation on $s$.  The operations are called splitting, twisting, merger, deformation, and expansion; each being further divided into a positive or negative type. This leads to a definition of sets $\ftw^{\pm}(s)$, $\fs^{\pm}(s)$, $\fst^{\pm}(s)$, $\fd^{\pm}(s)$, $\mE^{\pm}(s)$ of all loop sequences obtained by performing one of these operations on $s$.  The precise definition is given in \ref{split}-\ref{expan} in Section \ref{sec:pf}, but we also recommend the graphs and further discussion in \cite[Sec.~2.2]{Cha} for additional intuition.   

Moreover, to state the theorem, we define as in \cite{Jafar}
\begin{equ}[e:ell]
\ell(s) \eqdef \sum_{e\in E^+}t(e)^2\;,
\qquad
t(e)\eqdef \sum_{i=1}^m t_i(e)\quad  (e\in E^+)
\end{equ}
and $t_i(e) \eqdef $ the number of occurrences of $e$ minus the number of occurrences of $e^{-1}$ 
in the loop $l_i$. 
 The following is the finite $N$ master loop equation for the model \eqref{measure}. 
\begin{theorem}\label{theo:main}
Let $s$ be as above, and suppose that all vertices that are at distance $\le 1$ from any $l_i$ belong to $\Lambda$.
Then for $G=SO(N)$ (\cite[Theorem~3.6]{Cha})
\begin{equs}
(N -1)|s|\phi(s) & =N\beta \!\!\!\sum_{s'\in \fd^-(s)}\!\!\phi(s')
 - N\beta \!\!\! \sum_{s'\in \fd^+(s)}\!\!\phi(s')
+ N \!\!\! \sum_{s'\in \fs^-(s)} \!\! \phi(s')
 - N \!\!\! \sum_{s'\in \fs^+(s)}\!\! \phi(s')
 \\
&\quad 
 +\sum_{s'\in \ftw^-(s)} \!\! \phi(s')
 - \sum_{s'\in \ftw^+(s)} \!\! \phi(s')+ \frac{1}{N}\!\!\! \sum_{s'\in \fst^-(s)}\!\!\phi(s')
- \frac{1}{N}\!\!\! \sum_{s'\in \fst^+(s)}\!\!\phi(s')\;.	\label{e:master}
\end{equs}
For $G=SU(N)$ (\cite[Theorem~4.2]{Jafar}) 
\begin{equs}[e:jafar]
\Big(N |s|-\frac{\ell(s)}{N}\Big)\phi(s)
&=\frac{N\beta}{2} \!\!\!  \sum_{s'\in \fd^-(s)} \!\!\phi(s')
-\frac{N\beta}{2}\!\!\!  \sum_{s'\in \fd^+(s)}\!\!\phi(s')
+N \!\!\!  \sum_{s'\in \fs^-(s)}\!\!\phi(s')
-N \!\!\!  \sum_{s'\in \fs^+(s)}\!\!\phi(s')
\\
&\;
+\frac{N\beta}{2}\!\!\!  \sum_{s'\in \fex^-(s)}\!\!\phi(s')
-\frac{N\beta}{2}\!\!\! \sum_{s'\in \fex^+(s)}\!\!\phi(s')
+\frac{1}{N} \!\!\!  \sum_{s'\in \fst_{U}^-(s)}\!\!\phi(s')
-\frac{1}{N}\!\!\!  \sum_{s'\in \fst_{U}^+(s)}\!\!\phi(s')\, .
\end{equs}
\footnote{Compared to \cite[Theorem~4.2]{Jafar} we use a different notation $\mM_{U}^{\pm}$ for the sets of merger terms to distinguish it from $SO(N)$ case.}
For $G=U(N)$,
\begin{equs}[eq:phi1UN]
	N|s|\phi(s) & =\frac{N\beta}2 \!\!\! \sum_{s'\in \mathbb{D}^-(s)}\!\!\phi(s')
	-\frac{N\beta}{2} \!\!\! \sum_{s'\in \mathbb{D}^+(s)} \!\!\phi(s')
	+N\!\!\!\sum_{s'\in \mS^{-}(s)}\!\!\phi(s')
	-N\!\!\! \sum_{s'\in \mS^{+}(s)}\!\!\phi(s')
\\&\qquad\qquad\qquad\qquad
+\frac1N \!\!\!\sum_{s'\in \mM_{U}^{-}(s)}\!\!\phi(s')
-\frac1N\!\!\! \sum_{s'\in \mM_{U}^{+}(s)}\!\!\phi(s').
\end{equs}
\end{theorem}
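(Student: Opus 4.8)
The plan is to realize the measure $\mu_{\Lambda,N,\beta}$ as the invariant measure of a Langevin dynamic on the product manifold $G^{E_\Lambda^+}$, and then extract the master loop equation by applying It\^o's formula to the Wilson loop observables $W_s$ in stationarity. Concretely, let $\{X^k_a\}$ be an orthonormal basis of the Lie algebra $\mfg$ (with respect to the Killing-type inner product used to normalize $\sigma_N$), and for each edge $e\in E_\Lambda^+$ let $\mathcal{L}_e = \sum_k (\partial^k_e)^2$ be the Laplace--Beltrami operator generated by the corresponding right-invariant vector fields $\partial^k_e f(Q) = \frac{\dif}{\dif t}\big|_{t=0} f(\dots, e^{tX^k}Q_e,\dots)$. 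The SDE is
\begin{equs}
\dif Q_e = \sum_k \partial^k_e\Big(N\beta\,\Re\!\!\sum_{p\in\CP^+_\Lambda}\!\!\Tr(Q_p)\Big)\,X^k Q_e\,\dif t + \sqrt{2}\sum_k X^k Q_e\circ\dif B^k_e(t)\;,
\end{equs}
whose generator is $\mathcal{L} = \sum_e \mathcal{L}_e + \sum_e (\partial_e S)\cdot\partial_e$ with $S(Q) = N\beta\,\Re\sum_p\Tr(Q_p)$, and which is symmetric with respect to $\mu_{\Lambda,N,\beta}$. Stationarity then gives $\E[\mathcal{L} W_s] = 0$, i.e.
\begin{equs}
\sum_{e}\E\big[\mathcal{L}_e W_s\big] + \sum_e \E\big[(\partial_e S)\cdot(\partial_e W_s)\big] = 0\;.	\label{eq:key-identity}
\end{equs}

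The core of the argument is then a bookkeeping computation of the two sums in \eqref{eq:key-identity}. First I would record the basic differentiation rules: $\partial^k_e W_l$ inserts $X^k$ into the trace at each occurrence of $e$ (with a sign and a reversal of insertion point for occurrences of $e^{-1}$), and similarly $\partial^k_e S$ inserts $X^k$ at the (at most a few) plaquettes containing $e$. The second derivative $\mathcal{L}_e W_l = \sum_k (\partial^k_e)^2 W_l$ produces two kinds of terms: ``diagonal'' terms where both $X^k$'s hit the same occurrence of $e$, which by the Lie-algebra identity $\sum_k X^k X^k = -c_N\,\mathrm{Id}$ (the Casimir, $c_N = N-1$ for $SO(N)$ up to normalization, with the $\ell(s)/N$ correction appearing for $SU(N)$ from the extra scalar direction, and no correction for $U(N)$) yield the left-hand side $\propto |s|\,\phi(s)$; and ``off-diagonal'' terms where the two $X^k$'s hit two distinct occurrences of $e$ (in the same loop or in two different loops), which after applying $\sum_k (X^k)_{ij}(X^k)_{kl} = $ (the appropriate contraction identity for $\mathfrak{so}$, $\mathfrak{u}$, $\mathfrak{su}$) split or merge the traces — these are precisely the splitting $\fs^\pm$, twisting $\ftw^\pm$, and merger $\fst^\pm$ (resp. $\mM_U^\pm$) terms, with the powers of $N$ coming from the $1/N^m$ normalization in \eqref{e:phi} and the combinatorics of how many loops are present before/after. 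The drift term $\sum_e(\partial_e S)\cdot(\partial_e W_s)$ pairs an insertion in a plaquette with an insertion in the loop at a shared edge; using the same contraction identity, this deforms the loop by the plaquette (positively or negatively according to the relative orientation), giving the deformation $\fd^\pm$ terms with coefficient $N\beta$ (or $\frac{N\beta}{2}$ when $\Re$ contributes a factor $\frac12$, i.e. for $U(N)$ and $SU(N)$), and in the $SU(N)$/$U(N)$ case the $\Re$ also produces the expansion terms $\fex^\pm$ from the $Q_p^{-1}=\bar Q_p^\top$ half.

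The main obstacle, and where most of the work lies, is the last-mentioned combinatorial matching: showing that the raw terms generated by It\^o's formula, after applying the quadratic Casimir and the structure-constant contraction identities, are in bijective correspondence — with exactly the right signs and exactly the right powers of $N$ — with the sets $\fs^\pm, \ftw^\pm, \fst^\pm, \fd^\pm, \fex^\pm$ as defined combinatorially in \cite{Cha,Jafar}. This requires carefully tracking: (i) how a splitting at a self-intersection of a single loop versus at an intersection of two distinct loops changes the number $m$ of loops and hence the normalization $N^{-m}$; (ii) the orientation conventions for positive versus negative operations, which must be read off from the sign of $t_i(e)$ and the relative orientation of edge and plaquette; (iii) the fact that backtrack erasure $[\cdot]$ must be applied to the resulting cycles, and that null-cycle insertions/deletions do not affect $\phi$. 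The contraction identities differ across the three groups ($SO(N)$: $\sum_a X^a_{ij}X^a_{kl} = \frac12(\delta_{il}\delta_{jk}-\delta_{ik}\delta_{jl})$; $U(N)$: $\sum_a X^a_{ij}X^a_{kl}=-\delta_{il}\delta_{jk}$; $SU(N)$: the $U(N)$ identity minus $\frac1N\delta_{ij}\delta_{kl}$), and it is precisely these differences that produce the three distinct forms of the master loop equation; so I would treat $U(N)$ first as the cleanest case, then add the $-\frac1N\delta_{ij}\delta_{kl}$ correction to obtain $SU(N)$ (this is the source of both the $\ell(s)/N$ term on the left and a reshuffling of merger terms into $\mM_U^\pm$), and handle $SO(N)$ separately with its real structure. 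A minor technical point to dispatch at the outset is that the Langevin SDE on the compact manifold $G^{E_\Lambda^+}$ is globally well-posed with $\mu_{\Lambda,N,\beta}$ as its unique invariant measure and that $W_s$ is smooth, so that $\E[\mathcal{L}W_s]=0$ is rigorously justified; this is standard given compactness and smoothness of the drift.
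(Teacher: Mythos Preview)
Your approach is essentially the same as the paper's: realize $\mu_{\Lambda,N,\beta}$ as the invariant measure of a Langevin dynamic on $G^{E_\Lambda^+}$, then use stationarity ($\E[\mathcal L W_s]=0$, equivalently It\^o's formula plus expectation) together with the Lie-algebra contraction identities to sort the resulting terms into the combinatorial loop operations. The paper organizes the computation via the matrix ``magic formulas'' $\dif B\,M\,\dif B$ and $\Tr(\dif B M)\Tr(\dif B N)$ rather than explicit basis sums $\sum_k X^k_{ij}X^k_{kl}$, but this is only cosmetic.

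Two small misattributions to fix before you carry out the bookkeeping. First, the expansion terms $\fex^\pm$ do \emph{not} appear for $U(N)$; they arise only for $SU(N)$, and their source is not the $\Re$ in the action but the extra trace term $-\tfrac1N\Tr(Q_p-Q_p^*)I_N$ that appears in $\nabla\cS$ when projecting onto $\su(N)$. Second, the $\ell(s)/N$ correction on the left-hand side for $SU(N)$ does not come from the Casimir (which is still a scalar, $c_{\su(N)}=-N+\tfrac1N$, and contributes $(N-\tfrac1N)|s|$); it comes from the $\tfrac1N\delta_{ij}\delta_{kl}$ piece of the $\su(N)$ contraction identity acting on the \emph{off-diagonal} (splitting/merger) terms, which produces $\nu$-multiples of $W_s$ that, after combining across all loops, assemble into $\tfrac1N(|s|-\ell(s))$ and move to the left.
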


The proof of this theorem in \cite{Cha,Jafar} is based on Stein's exchangeable pairs and integration by parts (see Sec.~5 - Sec.~8 therein). Here we reprove it using a simple Langevin dynamic and It\^o calculus. The Langevin dynamic is given in \eqref{eq:YM} below. 
To illustrate our basic idea of using a dynamic in place of Stein's method or integration by parts, recall that  for a standard Gaussian $X$, one has integration by parts (Stein's lemma)
$\E [Xf(X)] = \E [f'(X)]$.
This identity can be derived in a simple way from It\^o's formula applied to a Langevin dynamic as follows. Consider the Ornstein-Uhlenbeck process
$\dif X_t = - \frac12 X_t \dif t + \dif B_t$ for a Brownian motion $B$.
Let $F$ be an anti-derivative of $f$, i.e. $F' = f$. Then
$\dif F(X_t) = F'(X_t) (-\frac12 X_t \dif t + \dif B_t) + \frac12 F''(X_t) \dif t$.
In stationarity, taking expectation yields Stein's lemma.
Our proof of Theorem~\ref{theo:main} is based on a generalization of this idea.

As applications of the master loop equations, 
Chatterjee and Jafarov \cite{Cha,Jafar} proved 
various properties of Wilson loops in the large $N$ limit under a smallness constraint in $\beta$, such as 
a discrete surface sum formula (gauge-string duality),
the factorization property of Wilson loops,  an area law upper bound, and real analyticity in $\beta$.
Further investigations were initiated by Basu and Ganguly \cite{MR3861073}, where more structure is deduced on the limit in the two dimensional lattice setting. 
In this paper we only focus on our new derivation of the loop equations so we do not restate these results.
However let us 
mention that the Langevin dynamic also has many other applications besides deriving loop equations.
In  \cite{SZZ22}, it is shown that  the Langevin dynamic
 can also be used to prove large $N$ limit, factorization, mass gap property and  uniqueness of the lattice Yang--Mills model on the infinite-volume lattice, again under smallness condition in $\beta$ (with the bound on $\beta$ therein is explicit and uniform in $N$).
We also remark that dynamics very similar with ours 
seem to have appeared in physics and are used for Monte Carlo simulations for lattice gauge theory, see e.g. \cite{batrouni1985,guha1983}.

Master loop equations for lattice gauge theories were originally stated in physics literature, see
Makeenko--Migdal \cite{MM1979}, Foerster \cite{Foerster1979}, Eguchi \cite{Eguchi1979}.
For more recent physics literature, see \cite[(2.18)]{AndersonKruczenski} for the $SU(N)$ case.
\footnote{Besides more precise notation and rigorous proof as done in \cite{Cha,Jafar} and this paper, we point out that a factorization property of Wilson loops when passing from finite $N$ loop equations to $N\to \infty$ equations
was often assumed in physics literature, whereas \cite{Cha,Jafar,SZZ22} could rigorously prove it for small $\beta$.} 
The first rigorous version was established for two-dimensional Yang-Mills model in continuum in \cite{Levy11}, 
and alternative proofs and extensions were given in \cite{MR3554890,MR3982691,MR3631396,MR3613519} on plane or surfaces. 
These equations belong to a general class of equations arising in quantum field theory and random matrix theory, known as Integration by Parts or Schwinger--Dyson equations. 
See for instance  \cite{CGM2009,GuionnetNovak} who derived  Schwinger--Dyson equations 
for orthogonal and unitary multimatrix models, which are to some extent related with the lattice Yang--Mills model. 

We remark that the lattice cutoff is a key simplification which allows us to prove Theorem~\ref{theo:main} in any dimension; it would be much more challenging to prove similar results in the continuum (besides the aforementioned two-dimensional results).
It would be natural to conjecture that the Langevin dynamic in Section~\ref{sec:SDE}
- which is the main ingredient in our proof - has 
a scaling limit given by the ones recently constructed in \cite{CCHS2d,CCHS3d} on the two and three dimensional torus. 
\footnote{This scaling limit result on   two dimensional torus  was obtained in \cite{Chevyrev2023} after completion of this paper.}
We believe that our method via Langevin dynamic is robust enough 
to derive master loop equations for more complicated models such as Yang--Mills--Higgs model on the lattice (or even more general gauge theories coupled with matter fields), which will be considered in our future work. 
We also refer to 
\cite[Sec.~18]{Cha} for a more comprehensive list of open questions; for instance, the possibility of rigorously deriving an Eguchi--Kawai type reduction formula in large $N$, see the physics references \cite{EguchiKawai1982,Gonzalez1983,Gonzalez2014} or the book \cite[Part~4]{MakeenkoBook}.

\subsection*{Acknowledgments}
H.S. gratefully acknowledges financial support from NSF grants DMS-1954091 and CAREER DMS-2044415, and helpful discussions with Ilya Chevyrev.  S.S. and R.Z. are grateful to the financial supports by National Key R\&D Program of China (No. 2022YFA1006300).
R.Z. is grateful to the financial supports of the NSFC (No. 12271030), and BIT Science and Technology Innovation Program Project 2022CX01001 and the financial supports  by the Deutsche Forschungsgemeinschaft (DFG, German Research Foundation) – Project-ID 317210226--SFB 1283, and helpful discussions with Xin Chen.  S. A. Smith is grateful for financial support from the Chinese Academy of Sciences. 
We also would like to thank Todd Kemp for discussion on relevant references, and an anonymous referee
who drew our attention to very interesting and relevant papers
\cite{Foerster1979,Eguchi1979,AndersonKruczenski} and
 \cite{EguchiKawai1982,Gonzalez1983,Gonzalez2014}.

\section{Preliminaries}
\label{sec:Pre}

This section collects some standard facts about Brownian motions on a Lie group $G$ or its Lie algebra $\mfg$,
mostly from \cite[Sec.~1]{Levy11}. 

We write the Lie algebras of $SO(N)$, $U(N)$, $SU(N)$
as $\so(N)$, $\u(N)$, $\su(N)$ respectively.
Every matrix $Q$ in one of these Lie groups satisfies $QQ^* = I_N$, and every matrix $X$ in one of these Lie algebras
satisfies $X + X^* = 0$.
Here $I_N$ denotes the identity matrix, and
for any matrix $M$ we write $M^*$ for the conjugate transpose of $M$.
Let $M_N(\R)$ and $M_N(\C)$ be the space of real and complex $N\times N$ matrices.

We endow $M_N(\C)$ with the Hilbert-Schmidt inner product
\begin{equ}[e:HS]
	\< X,Y\> = \Re \Tr (X Y^*)
	\qquad  \forall X,Y\in M_N(\C).
\end{equ}
We restrict this inner product to our
Lie algebra $\mfg$, which is then invariant under the adjoint action.
In particular for $X,Y\in \so(N),\u(N)$ or $\su(N)$ we have
$\< X,Y\> =- \Tr (XY)$.

It is well-known that this induces an inner product
on the tangent space at every $Q\in G$
via the right multiplication on $G$.
Indeed, given any $X \in \mfg$, the curve $t \mapsto e^{tX} Q$ is well approximated near $t=0$ by $Q+tXQ$ up to an error of order $t^{2}$.
Hence, for $X,Y\in \mfg$, $XQ$ and $YQ$ are two tangent vectors on the tangent space at
$Q\in G$, and their inner product is given by $\Tr((XQ)(YQ)^*) = \Tr(XY^*)$.

Denote by $\mathfrak{B}$ and $B$ the Brownian motions on $G$ and its Lie algebra $\mfg$ respectively.
$B$ is a continuous Gaussian process characterized by
\begin{equation}\label{eq:DefBM}
\E \Big[  \<B(s),X \> \<B(t),Y \>  \Big] = \min(s,t) \< X,Y \>
\qquad
\forall
X,Y \in \mfg.
\end{equation}
By  \cite[Sec.~1.4]{Levy11}, the Brownian motions $\mathfrak{B}$ and $B$ are related through the following SDE:
\begin{equ}[e:dB]
	\dif \mathfrak B(t) = \dif B(t) \circ \mathfrak B(t) = \dif B(t)\, \mathfrak B(t)
	+ \frac{c_\mfg}{2} \mathfrak B(t) \dif t,
\end{equ}
where $\circ$ is the Stratonovich product, and $\dif B\, \mathfrak B$ is in the It\^o sense.
Here the constant $c_\mfg$ is determined by
$ \sum_{\alpha} v_\alpha^2  =c_\mfg I_N$ where
$(v_\alpha)_{\alpha=1}^{\dim_{\R}\mfg}$  is an orthonormal basis of $\mfg$. Note that $\dim_{\R}\mfg$ indicates that for matrices with complex entries, dimension is counted with respect to $\R$.
Moreover, by \cite[Lem.~1.2]{Levy11},
\begin{equ}[e:c_g]
	c_{\so(N)} =  -\frac12(N-1),
	\quad
	c_{\u(N)} =  -N,
	\quad
	c_{\su(N)} =  -\frac{N^2-1}{N} .
\end{equ}
Note that in \cite[Lem.~1.2]{Levy11}, the scalar product differs from \eqref{e:HS} by a scalar multiple depending on $N$ and $\mfg$, so we accounted for this in the expression for $c_\mfg$ above.

Denote by $\delta$  the Kronecker function, i.e. $\delta_{ij}=1$ if $i=j$ and $0$ otherwise. For any matrix $M$, we write $M^{ij}$ for its $(i,j)$th entry.
The following holds by straightforward calculations:
\minilab{e:BB}
\begin{equs}[2]
	\dif\< B^{ij}, B^{k\ell}\>
	&=\frac12(\delta_{ik}\delta_{j\ell}-\delta_{i\ell}\delta_{jk})\dif t,
	&\qquad \mfg=\so(N);		\label{e:BB1}
\\
	\dif\< B^{ij}, B^{k\ell}\>
	&= -\delta_{i\ell} \delta_{jk}\, \dif t\;,
	&\qquad\mfg=\u(N);		\label{e:BB2}
\\
	\dif\< B^{ij}, B^{k\ell}\>
	&= \big( -\delta_{i\ell} \delta_{jk} + \frac{1}{N} \delta_{ij}\delta_{k\ell} \big)\, \dif t\;,
	&\qquad\mfg=\su(N).				\label{e:BB3}
\end{equs}
\begin{remark}
The relation \eqref{e:BB3} can be deduced from \eqref{e:BB2} as follows.  Given a $\u(N)$ Brownian motion $t \mapsto B(t)$, we may define an $\su(N)$ Brownian motion $t \mapsto \hat{B}(t)$ by letting $\hat{B}(t)\eqdef B(t)-\frac{1}{N}\text{Tr}(B(t))I_N$.  Since any $X \in \su(N)\subset \u(N)$ is traceless, the identity \eqref{eq:DefBM} satisfied by $B$ implies the same identity for $\hat{B}$ in light of the equality $\langle \hat{B}(t) , X  \rangle=\langle B(t) , X \rangle$, which follows from $\langle I_N,X \rangle=\text{Tr}(X)=0$.  To obtain \eqref{e:BB3} from \eqref{e:BB2}, note that off diagonal entries of $\hat{B}$ are identical to those of $B$, so \eqref{e:BB3} holds if $i \neq j$ and $k \neq \ell$.  Similarly, since on-diagonal entries of $B$ are independent from off-diagonal entries, it holds that $\langle \hat{B}^{ii}, \hat{B}^{k\ell} \rangle=0$ for $k \neq \ell$, again consistent with \eqref{e:BB3}.  For $k=\ell$, note that 
\begin{equation}
\dif\langle \hat{B}^{ii},\hat{B}^{\ell \ell} \rangle= \dif \Big \langle B^{ii}-\frac{1}{N}\sum_{j=1}^{N}B^{jj},  B^{\ell\ell}-\frac{1}{N}\sum_{j=1}^{N}B^{jj} \Big \rangle= \big (-\delta_{i\ell}+\frac{1}{N}+\frac{1}{N}-N \cdot \frac{1}{N^{2}} \big ) \dif t=\big ( -\delta_{i\ell}+\frac{1}{N} \big ) \dif t, \nonumber
\end{equation}
where we used the independence of diagonal entries of $B$.
\end{remark}

\begin{remark}
	Note that the choice of this inner product \eqref{e:HS} may differ among the literature
	by a  constant multiple. \eqref{e:c_g} will then differ by (the inverse of) the same constant. The r.h.s. of \eqref{e:BB1} -- \eqref{e:BB3} should then also be multiplied by the suitable constants. 
\end{remark}

\section{Yang Mills SDE}
\label{sec:SDE}

Define the configuration space as the Lie group product  $\cQ=G^{E_{\Lambda}^+}$,
consisting of all maps $Q:e \in E_{\Lambda}^{+} \mapsto Q_{e} \in G$.
Let $\q=\mfg^{E_{\Lambda}^+}$ be the corresponding direct sum of $\mfg$ and note that $\q$ is the Lie algebra of the Lie group $\cQ$.
For any matrix-valued functions $A,B$ on $E_{\Lambda}^+$, we denote by $AB$
the pointwise product.
 Given $X \in \q$, the exponential map $t \mapsto \text{exp}(tX)  $ is defined by $\text{exp}(tX)_{e}\eqdef e^{tX_{e}}$.

 As above, the tangent space at $Q \in \mathcal{Q}$ consists of the products $XQ$ with $X \in \q$, and given two such elements $XQ$ and $YQ$, their inner product is defined by
\begin{equation}
\langle XQ,YQ \rangle = \sum_{e \in E_{\Lambda}^{+} } \text{Tr}(X_{e}Y_{e}^{*}) \nonumber.
\end{equation}
Given any function $f \in C^{\infty}(\mathcal{Q})$, the right-invariant derivative is given by $\frac{\dif}{\dif t}|_{t=0} f(\text{exp}(tX)  Q)$.  For each $Q \in \mathcal{Q}$, the differential $\nabla f(Q)$ is an element of the tangent space at $Q$ which satisfies for each $X \in \q$
\begin{equation}
\langle \nabla f(Q),  XQ \rangle = \frac{\dif}{\dif t}\Big|_{t=0} f(\text{exp}(tX)  Q). \label{eee1}
\end{equation}

 Denote by $\mathfrak{B} = (\mathfrak{B}_e)_{e\in E_\Lambda^+}$
and $B = (B_e)_{e\in E_\Lambda^+}$
the Brownian motions on $\cQ$ and $\q$ respectively, where $\mathfrak{B}_e$ and $B_e$ are related through \eqref{e:dB} for each $e\in E_\Lambda^+$. For any two edges $e_{1}, e_{2} \in E_\Lambda^+$ with $e_{1} \neq e_{2}$, the pairs $(\mathfrak{B}_{e_{1}},B_{e_{1}})$ and $(\mathfrak{B}_{e_{2}},B_{e_{2}})$ are independent. 
Let $\mathcal S (Q) \eqdef N\beta \Re \sum_{p\in \CP^+_\Lambda} \Tr(Q_p)$.
We consider the Langevin dynamic for the measure \eqref{measure}, which is
	the following SDE on $\cQ$
\begin{equ}[e:langevin]
	\dif Q =\frac12 \nabla \mathcal S (Q) \dif t + \dif \mathfrak B\;.
\end{equ}

We now derive an explicit expression for $ \nabla \mathcal S$.
For a plaquette $p=e_1e_2e_3e_4 \in \CP$, we write $p\succ e_1$ to indicate that $p$ is a plaquette that starts from edge $e_{1}$.
Note that for each edge $e$, there are $2(d-1)$ plaquettes  in $\CP$
such that $p\succ e$. 
For {\it any} Lie algebra $\mfg$ embedded into $M_N(\C)$, it forms a closed subspace of $M_N$,
 and therefore $M_N$ has an orthogonal decomposition $M_N=\mfg \oplus \mfg^\perp$.
Given $M\in M_N$, we denote by $\proj M \in \mfg$ the orthogonal projection onto $\mfg$.
\begin{lemma}
It holds that
\begin{equ}[e:grad]
\nabla \cS(Q)_{e}=  N\beta \sum_{p\in \cP_\Lambda, p\succ e} \proj Q_{p}^{*} \cdot (Q_{e}^{*})^{-1}\;,
\end{equ}
where $\cdot$ is matrix multiplication.
\end{lemma}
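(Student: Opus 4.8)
The plan is to compute the right-invariant derivative $\frac{\dif}{\dif t}|_{t=0}\cS(\text{exp}(tX)Q)$ directly for an arbitrary $X\in\q$ and then read off $\nabla\cS(Q)$ from the defining relation $\langle\nabla\cS(Q),XQ\rangle=\frac{\dif}{\dif t}|_{t=0}\cS(\text{exp}(tX)Q)$. Since $\cS(Q)=N\beta\,\Re\sum_{p\in\CP^+_\Lambda}\Tr(Q_p)$ is a sum over plaquettes, and perturbing $Q_e$ to $e^{tX_e}Q_e$ affects only those plaquettes containing the edge $e$ (or $e^{-1}$), it suffices to differentiate a single trace $\Tr(Q_{f_1}Q_{f_2}Q_{f_3}Q_{f_4})$ with respect to each occurrence of the edge $e$. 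First I would note that, by cyclic invariance of the trace, each plaquette $p$ with $e$ (or its inverse) appearing among its edges can be rewritten, up to cyclic permutation, so that the relevant edge is in the first slot; this is exactly what the bookkeeping notation $p\succ e$ and the passage from $\CP^+_\Lambda$ to $\CP_\Lambda$ (which also accounts for both orientations) is designed to handle.

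The key computation is then: for a word $Q_p=Q_eR$ where $R=Q_{e_2}Q_{e_3}Q_{e_4}$ collects the other three factors, we have $\frac{\dif}{\dif t}|_{t=0}\Tr(e^{tX_e}Q_eR)=\Tr(X_eQ_eR)=\langle X_e, (Q_eR)^*\rangle_{\text{not quite}}$ — more precisely, using the Hilbert–Schmidt pairing $\langle A,B\rangle=\Re\Tr(AB^*)$, we get $\Re\Tr(X_eQ_pQ_e^{-1}\cdot Q_e)=\Re\Tr(X_e(Q_p^*(Q_e^*)^{-1})^* )$ after identifying the matrix paired against $X_e$. I would carry this out carefully: summing over all plaquettes $p\in\CP_\Lambda$ with $p\succ e$, the contribution to $\frac{\dif}{\dif t}|_{t=0}\cS$ is $N\beta\sum_{p\succ e}\Re\Tr\big(X_e\,Q_e\cdot(\text{rest of }p)\big)$, which I rewrite as $N\beta\sum_{p\succ e}\langle X_e,\ (Q_p^*(Q_e^*)^{-1})\rangle$ using \eqref{e:HS}. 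The only subtlety here is matching conventions: the paper defines the gradient via the pairing $\langle\nabla\cS(Q),XQ\rangle=\sum_e\Tr((\nabla\cS(Q))_e(X_eQ_e)^*)$-type identity restricted to $\mfg$, so I must track where the $Q_e$ on the right of $X_e$ goes and confirm it produces the stated factor $(Q_e^*)^{-1}$.

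The final step is the projection. Since $X_e$ ranges only over $\mfg$, not all of $M_N(\C)$, the matrix appearing in the pairing is only determined modulo $\mfg^\perp$; the relation $\langle X_e, M\rangle=\langle X_e,\proj M\rangle$ for all $X_e\in\mfg$ forces $(\nabla\cS(Q))_e=N\beta\sum_{p\succ e}\proj\big(Q_p^*(Q_e^*)^{-1}\big)$, which is \eqref{e:grad}. I should also double check that the factor-of-$2$ issues and the $\Re$ (which is part of the definition of $\cS$ and of the inner product \eqref{e:HS}) are consistent, and that for $SO(N)$ where $\Re$ is vacuous the formula still reads the same. I do not expect any deep obstacle here; the main point requiring care is purely bookkeeping — correctly enumerating which plaquettes and which orientations contribute to the derivative in the $e$-direction (the role of $\CP_\Lambda$ versus $\CP^+_\Lambda$ and the $2(d-1)$ plaquettes through each edge), together with keeping the adjoint/inverse placement straight so that the cyclic rewriting lands the perturbed factor in the correct position.
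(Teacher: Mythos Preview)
Your approach is the paper's: compute the directional derivative, rewrite it via the Hilbert--Schmidt pairing, and extract the gradient using the projection onto $\mfg$. However, there is a placement error in your final step. You conclude $(\nabla\cS(Q))_e=N\beta\sum_{p\succ e}\proj\big(Q_p^*(Q_e^*)^{-1}\big)$ and declare this to be \eqref{e:grad}, but the lemma states $\proj(Q_p^*)\cdot(Q_e^*)^{-1}$, and these are genuinely different: right multiplication by $Q_e=(Q_e^*)^{-1}$ does not commute with the orthogonal projection $\proj$ onto $\mfg$.

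The slip occurs in the line ``$\langle X_e,M\rangle=\langle X_e,\proj M\rangle$ forces $(\nabla\cS(Q))_e=\proj M$''. The gradient $\nabla\cS(Q)_e$ is an element of the tangent space $T_{Q_e}G=\mfg\cdot Q_e$, not of $\mfg$, so it cannot literally equal $\proj$ of anything. The correct unpacking (and this is exactly what the paper does) is to write $\nabla\cS(Q)_e=Y_eQ_e$ with $Y_e\in\mfg$; then $\langle Y_eQ_e,\,X_eQ_e\rangle=\langle Y_e,X_e\rangle$, while on the other side $\Re\Tr(X_eQ_p)=\langle X_e,Q_p^*\rangle=\langle X_e,\proj Q_p^*\rangle$ (your intermediate rewrite as $\langle X_e,\,Q_p^*(Q_e^*)^{-1}\rangle$ is itself incorrect). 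This yields $Y_e=N\beta\sum_{p\succ e}\proj Q_p^*$, and hence $\nabla\cS(Q)_e=Y_eQ_e=N\beta\sum_{p\succ e}\proj Q_p^*\cdot(Q_e^*)^{-1}$. With this correction your argument is complete and identical to the paper's.
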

\begin{proof}
To prove the claim, fixing an edge $e\in E^+_\Lambda$,
let $X \in \mfg$ and with a slight abuse of notation we write
$X\in \q$ for the function which equals  $X$ at $e$ and zero elsewhere.
Note that for
	every $\tilde p\in \CP^+_\Lambda$ that contains both the beginning point and the ending point of $e$,
there is a unique way to obtain  a plaquette
$p\in \CP_\Lambda$ such that $p\succ e$ by a cyclic permutation  and possibly a  reversal of the four edges.
One then has  $\Tr(Q_{\tilde p})=\Tr(Q_p)$ (without reversal)
or $\Tr(Q_{\tilde p})=\Tr(Q_p^{-1})=\Tr(Q_p^*)$ (with reversal).
We have
$$
\frac{\dif}{\dif t} \Big|_{t=0}
\Re \Tr(\text{exp} (tX) Q_p)
=
\frac{\dif}{\dif t} \Big|_{t=0}
\Re \Tr( (\text{exp} (tX) Q_p)^*)
=\text{Re}\text{Tr}(XQ_{p})
$$
where we used $(XQ_p)^* = \overline{(XQ_p)^t}$.
Therefore the
derivative of $\cS$ at $Q$ in the tangent direction $XQ$
is equal to $N\beta$ times
\begin{equ}
\sum_{p\in \cP_\Lambda,p\succ e } \text{Re}\text{Tr}(XQ_{p}) \nonumber
=\sum_{p\in \cP_\Lambda,p\succ e }
\langle X, Q_{p}^{*} \rangle =\sum_{p\in \cP_\Lambda,p\succ e } \langle X,  \proj Q_{p}^{*} \rangle
=\sum_{p\in \cP_\Lambda,p\succ e } \text{Re} \text{Tr}(X^{*}\proj Q_{p}^{*} )  .\label{gen1}
\end{equ}
Furthermore, note that
\begin{equation}
\langle \nabla \cS(Q)_{e} ,  XQ_{e} \rangle=\text{Re}\text{Tr}(\nabla \cS(Q)_{e} Q_{e}^{*}X^{*})=\text{Re}\text{Tr}( X^{*} \nabla \cS(Q)_{e}Q_{e}^{*} )\;. \label{gen2}
\end{equation}
Keeping in mind \eqref{eee1}, to ensure that \eqref{gen1} and \eqref{gen2} agree,
we have \eqref{e:grad}.
\end{proof}

For our specific choice of Lie algebras, our SDE system reads
\begin{equ}[eq:YM]
\dif Q_e =\frac12 \nabla \mathcal S (Q)_e \dif t
+\frac12 c_{\mathfrak g}Q_e\dif t+\dif B_eQ_e \;,    \qquad  (e\in E_\Lambda^+)
\end{equ}
where $c_{\mathfrak g}$ is as in \eqref{e:c_g} and
\begin{equ}[e:DS]
\frac12 \nabla \cS(Q)_{e}=
\begin{cases}
 \displaystyle 
 -\frac14N\beta \sum_{p\in \cP_\Lambda,p\succ e}(Q_p-Q_p^*)Q_e\;,
&\qquad
\mfg \in  \{\so(N),\u(N)\}\;,
\\
 \displaystyle - \frac14 N\beta\sum_{p\in \cP_\Lambda ,p\succ e}
	\Big( (Q_p-{Q}_p^{*}) - \frac{1}{N}\tr(Q_p-{Q}_p^{*}) I_N\Big)   Q_e\;,
&\qquad
\mfg \in \su(N)\;.
\end{cases}
\end{equ}

While our measure \eqref{measure} and the dynamic are both defined on the
configuration space $\cQ=G^{E_{\Lambda}^+}$ (in particular the above SDE system
is parametrized by $e\in E_\Lambda^+$), when we apply it to Wilson loops later,
we will also need to consider the dynamic of $Q_{e^{-1}}$ for  $e\in E_\Lambda^+$,
which is just $Q_{e}^*$. So we give the conjugate transpose of \eqref{eq:YM}:
\begin{equ}[eq:YM*]
\dif Q_e^* =\frac12 \big(\nabla \mathcal S (Q)_e\big)^* \dif t
+\frac12 c_{\mathfrak g}Q_e^*\dif t+Q_e^*\dif B_e^* \;,
	\qquad  (e\in E_\Lambda^+)\;.
\end{equ}

This system is well-posed and 
has \eqref{measure} as invariant measure, 
as we show in the next two lemmas.

\bl\label{lem:exist} For fixed $N\in\mN$, $T>0$ and any initial data $Q(0)=(Q_e(0))_{e\in E_\Lambda^+}\in \cQ$, there exists a  unique solution $Q=(Q_e)_{e\in E_\Lambda^+}\in C([0,T];\cQ)$ to \eqref{eq:YM} a.s..
\el

\begin{proof} For fixed $N$ and $\Lambda$, we can write \eqref{eq:YM} as the system for the entries of the matrices $Q_e$, which can be viewed as a finite dimensional SDE with locally Lipschitz coefficients. We introduce a stopping time
	$$\tau:=\inf \{t\geq 0: \|Q_e(t)\|_{\infty}>2, \, \, \text{for at least one }  \, e\in E^+_\Lambda\}\wedge T,$$
 and we obtain  local solutions $Q=(Q_e)_{e\in E_\Lambda^+}$ with $Q_e\in C([0,\tau];M_N)$ for $e\in E_\Lambda^+$,  which satisfies \eqref{eq:YM} before $\tau$, by fixed point argument (see e.g. \cite[Chapter~3]{MR3410409}).

Since for  $e\in E_\Lambda^+$, 
$\nabla \cS(Q)_e$ belongs to the tangent space of $G$ at $Q_e$, 
 exactly the same argument as in \cite[Lemma 1.3]{Levy11} imply that $Q_e(t)\in G$, $\forall t\geq0$,  and $\tau=T$ a.s.. 
 Indeed, by \cite[Lemma 1.3]{Levy11}, the Brownian motion  $\mathfrak{B}_e$ in $G$ satisfies $\mathfrak{B}_e(t)\in G$ for every $t\geq0$ a.s.. 
 By standard calculation (cf. \cite[Chapter~3]{Hsu}) the generator $L$ for our SDE with $F\in C^\infty(\cQ)$  is given by
 	\begin{align}\label{e:LF}
 		LF=	\frac12\sum_{e\in E_\Lambda^+}\Delta_{Q_e}F+\sum_{e\in E_\Lambda^+}\frac12\<\nabla \cS(Q)_e,\nabla_{Q_e} F\>.
 	\end{align}
 	Here $\Delta_{Q_e}$ and $\nabla_{Q_e}$ are the
 	Laplace--Beltrami operator and the gradient  (w.r.t. the variable $Q_e$) on $G$ endowed with
 	the metric given in Sec~\ref{sec:Pre}.
 When calculating $\dif (Q_eQ_e^*)$ using It\^o's formula, the first sum in $L(Q_eQ_e^*)$ from \eqref{e:LF}  vanishes as in the calculation in  \cite[Lemma 1.3]{Levy11}, and the second sum from $L(Q_eQ_e^*)$ is also zero since $\nabla \cS(Q)_e$ belongs to the tangent space of $G$ at $Q_e$.
\end{proof}

\begin{lemma}\label{lem:inv}
	\eqref{measure} is invariant under the SDE system \eqref{eq:YM}.
\end{lemma}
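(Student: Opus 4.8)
The plan is to recognise \eqref{e:langevin} as a gradient-type (Langevin) diffusion for $\mathcal{S}$ on the compact Riemannian manifold $\cQ$, verify that $\mu_{\Lambda,N,\beta}$ is infinitesimally invariant, i.e. $\int_{\cQ}\mathcal{L}f\,\dif\mu_{\Lambda,N,\beta}=0$ for all $f\in C^\infty(\cQ)$ where $\mathcal{L}$ is the generator, and then deduce invariance from the well-posedness established in Lemma~\ref{lem:exist}.

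First I would identify the generator. By \eqref{e:dB} together with the argument recalled in the proof of Lemma~\ref{lem:exist} (see \cite[Sec.~1.4]{Levy11} and \cite[Lem.~1.3]{Levy11}), on each factor $G$ the diffusion $\dif B_e Q_e + \frac{c_\mfg}{2}Q_e\,\dif t$ has generator $\frac12\Delta_G$, where $\Delta_G$ is the Laplace--Beltrami operator of the invariant metric \eqref{e:HS}; equivalently $\Delta_G=\sum_\alpha V_\alpha^2$ with $V_\alpha f(Q)=\frac{\dif}{\dif t}\big|_{t=0} f(e^{t v_\alpha}Q)$ and $(v_\alpha)$ an orthonormal basis of $\mfg$. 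Since distinct edges are independent, the diffusion part of \eqref{e:langevin} has generator $\frac12\Delta_{\cQ}$ with $\Delta_{\cQ}\eqdef\sum_{e\in E_\Lambda^+}\Delta_G^{(e)}=\div\nabla$, where $\nabla$ and $\div$ are the gradient and divergence on $\cQ$ with respect to the Riemannian volume. Adding the drift $\frac12\nabla\mathcal{S}$ from \eqref{e:langevin}, the generator is
\begin{equation*}
\mathcal{L} f = \tfrac12 \Delta_{\cQ} f + \tfrac12 \langle \nabla \mathcal{S}, \nabla f\rangle, \qquad f \in C^\infty(\cQ).
\end{equation*}

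Next I would rewrite the measure: the Haar measure $\sigma\eqdef\prod_{e\in E_\Lambda^+}\dif\sigma_N(Q_e)$ on $\cQ$ coincides up to a constant with the Riemannian volume of the invariant metric (by bi-invariance of the metric), so by \eqref{measure} we have $\dif\mu_{\Lambda,N,\beta}=Z_{\Lambda,N,\beta}^{-1}e^{\mathcal{S}(Q)}\dif\sigma$ with $\mathcal{S}\in C^\infty(\cQ)$. Then for $f,g\in C^\infty(\cQ)$, using the product rule $\div(e^{\mathcal{S}}\nabla f)=e^{\mathcal{S}}(\Delta_{\cQ}f+\langle\nabla\mathcal{S},\nabla f\rangle)$ and that the integral of a divergence over the closed manifold $\cQ$ vanishes,
\begin{equation*}
\int_{\cQ} g\, \mathcal{L} f \, \dif\mu_{\Lambda,N,\beta} = \frac{1}{2 Z_{\Lambda,N,\beta}}\int_{\cQ} g\, \div\!\big(e^{\mathcal{S}}\nabla f\big)\, \dif\sigma = -\frac{1}{2}\int_{\cQ} \langle \nabla g, \nabla f\rangle\, \dif\mu_{\Lambda,N,\beta}.
\end{equation*}
The right-hand side is symmetric in $f$ and $g$, so $\mathcal{L}$ is symmetric on $L^2(\mu_{\Lambda,N,\beta})$; taking $g\equiv1$ gives $\int_{\cQ}\mathcal{L}f\,\dif\mu_{\Lambda,N,\beta}=0$ for all $f\in C^\infty(\cQ)$. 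Finally, writing $(P_t)_{t\ge0}$ for the Markov semigroup of \eqref{eq:YM}, which preserves $C^\infty(\cQ)$ since $\cQ$ is compact and the coefficients of \eqref{eq:YM} are smooth, It\^o's formula gives $\frac{\dif}{\dif t}\int_{\cQ}P_tf\,\dif\mu_{\Lambda,N,\beta}=\int_{\cQ}\mathcal{L}P_tf\,\dif\mu_{\Lambda,N,\beta}=0$, hence $\int P_tf\,\dif\mu_{\Lambda,N,\beta}=\int f\,\dif\mu_{\Lambda,N,\beta}$ for all $t\ge0$ and all $f$, i.e. $\mu_{\Lambda,N,\beta}$ is invariant (in fact reversible).

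The only genuinely technical inputs are the identification of the diffusion part of \eqref{eq:YM} with $\frac12\Delta_{\cQ}$ and of the Haar measure with the Riemannian volume; both are classical facts about Brownian motion on compact Lie groups with bi-invariant metric, and are already used (via \cite{Levy11}) in the proof of Lemma~\ref{lem:exist}. With these in hand the claim reduces to the standard integration-by-parts identity for Langevin dynamics, so I do not anticipate any serious obstacle.
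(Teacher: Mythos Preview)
Your proposal is correct and follows essentially the same approach as the paper: identify the generator as $\tfrac12\Delta_{\cQ}+\tfrac12\langle\nabla\mathcal{S},\nabla\cdot\rangle$, use integration by parts with respect to the Haar (Riemannian) measure to obtain the symmetric Dirichlet form $-\tfrac12\int\langle\nabla f,\nabla g\rangle\,\dif\mu_{\Lambda,N,\beta}$, and conclude invariance from $\mathcal{L}1=0$. You are slightly more explicit than the paper about the passage from infinitesimal invariance to invariance via the semigroup, and about identifying Haar measure with Riemannian volume, but the argument is the same.
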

\begin{proof}
 Recall the generator in \eqref{e:LF}. Using integration by parts w.r.t. the Haar measure, we have for $F, G\in C^\infty(\cQ)$
\begin{align}\label{eq:in}
	\int (LF) G\dif \mu_{\Lambda,N,\beta}=&-\frac12\sum_{e\in E_\Lambda^+}\int\<\nabla_{Q_e}F,\nabla \cS(Q)_e\>G\dif \mu_{\Lambda,N,\beta}+\frac12\sum_{e\in E_\Lambda^+}\int\<\nabla_{Q_e}F,\nabla \cS(Q)_e\>G\dif \mu_{\Lambda,N,\beta}\no
	\\&-\frac12\sum_{e\in E_\Lambda^+}\int\<\nabla_{Q_e}F,\nabla_{Q_e} G \>\dif \mu_{\Lambda,N,\beta}\\=&-\frac12\sum_{e\in E_\Lambda^+}\int\<\nabla_{Q_e}F,\nabla_{Q_e} G \>\dif \mu_{\Lambda,N,\beta}=	\int (LG) F\dif \mu_{\Lambda,N,\beta}\no,
\end{align}
where we exchange $F$ and $G$ in the last step. Hence, $L$ is symmetric w.r.t. $\mu_{\Lambda,N,\beta}$ and the result follows by $L1\equiv0$. 
\end{proof}

Using Lemma \ref{lem:exist}, we can choose $\mu_{\Lambda, N,\beta}$ as an initial distribution and obtain a stationary solution $Q\in C([0,T];\cQ)$. We will fix this stationary solution in the following proof.


\section{Proof of the main theorem}\label{sec:pf}

The proof of Theorem~\ref{theo:main} is based on the It\^o formula applied to $W_{s}$ according to the dynamics \eqref{eq:YM}. We consider initial datum distributed according to $\mu_{\Lambda, N, \beta}$, so that the solution is stationary in law, c.f. Lemma \ref{lem:inv}.  The master loop equation for $\phi(s)$ is obtained by normalizing and taking expected value. The nonlinear drift  of the SDE yields the deformation and expansion terms,
 while the It\^o correction leads to the splitting, twisting and merger terms, in addition to a multiple of $W_{s}$, which combined with $c_\mfg$ part gives the left-hand side of the master loop equation
\eqref{e:master}.

We recall from   \cite[Sec.~2]{Cha} and \cite[Sec.~2]{Jafar}
the following notation and definitions.

\begin{enumerate}[leftmargin=*,label=(O\arabic*)]\setlength\itemsep{0.5em}
	\item		\label{split}
	$\times^1_{x,y} l$ and $\times^2_{x,y} l$ denote the
	pair of loops obtained by
	positive splitting of $l$ at $x$ and $y$
	if $l$ contains the same edge $e$ at $x$ and $y$,
	or
	negative splitting
	if $l$ contains $e$ at location $x$ and $e^{-1}$ at location $y$.
	For $l=aebec$ (where $a, b, c$ are paths and $e$ is an edge),  	$\times^1_{x,y} l \eqdef [aec]$ and $\times^2_{x,y} l \eqdef [be]$. For $l=aebe^{-1}c$, 	$\times^1_{x,y} l \eqdef [ac]$ and $\times^2_{x,y} l \eqdef [b]$.
	We say that a loop sequence $s'$ is obtained from splitting $s$ provided that exactly two components of $s'$ arise from splitting a single loop in $s$.  
	See  \cite[Fig.~8, Fig.~9]{Cha} and \cite[Fig.~6, Fig.~7]{Jafar} for graphical illustrations of positive and negative splittings.
	
	The sets $\mS^{+}(s)$ and $\mS^{-}(s)$ consist of all loop sequences obtained from positive or negative splitting of $s$, respectively.  
	\item		\label{twist}
	$\propto_{x,y} l $ denotes the negative twisting
	if $l$ contains an edge $e$ at both $x$ and $y$,
	or positive twisting if $l$ contains an edge $e$ at location $x$ and $e^{-1}$ at location $y$. For $l=aebec$, $\propto_{x,y} l \eqdef [ab^{-1}c]$. For  $l=aebe^{-1}c$, $\propto_{x,y} l \eqdef [aeb^{-1}e^{-1}c]$.  We say that a loop sequence $s'$ is obtained from twisting $s$ provided that exactly one component of $s'$ arises from twisting one loop in $s$. 
	See  \cite[Fig.~10, Fig.~11]{Cha} for graphical illustrations of positive and negative twistings.
	
	The sets $\mT^{+}(s)$ and $\mT^{-}(s)$ consist of all loop sequences obtained from positive or negative twisting of $s$, respectively.
	\item		\label{merger}
	$l\oplus_{x,y} l' $ and
	$l\ominus_{x,y} l'$  are positive and negative mergers of $l$ and $l'$
	at locations $x,y$. For $l=aeb$ and $l'=ced$ (where $a,b,c,d$ are paths and $e$ is an edge), $l\oplus_{x,y} l' =[aedceb]$, $l\ominus_{x,y} l'=[ac^{-1}d^{-1}b]$. For $l=aeb$ and $l'=ce^{-1}d$, $l\oplus_{x,y} l' =[aec^{-1}d^{-1}eb]$, $l\ominus_{x,y} l'=[adcb]$ (here $x$ and $y$ are the unique location in $l$ and $l'$, respectively, where $e$ or $e^{-1}$ occurs and $e$ is the edge occurring at location $x$ in $l$).  We say that a loop sequence $s'$ is obtained from merging $s$ provided that exactly one component of $s'$ arises from merging two loops in $s$.  See  \cite[Fig.~4, Fig.~5]{Cha} and \cite[Fig.~2, Fig.~3]{Jafar} for graphical illustrations of positive and negative mergers.
	
	The sets $\mM^{+}(s)$ and $\mM^{-}(s)$ denote all loop sequences obtained from either positive mergers or negative mergers of $s$.  Furthermore, we define two more sets $\mM^{+}_{U}(s) \subset \mM^{+}(s)$ and $\mM^{-}_{U}(s) \subset \mM^{-}(s)$; the first consists of positive mergers resulting from an edge $e$ appearing in both of the two merged loops; the second consists of negative mergers where an edge $e$ occurs in one loop and $e^{-1}$ in the other. 
	\item  \label{deform}
	$l \oplus_x p$ and $l\ominus_x p$
	are deformations obtained
	by merging $l$ and $p$ at locations $x$ and $y$
	(here $y$ is the unique location in $p$ where $e$ or $e^{-1}$ occurs and $e$ is the edge occurring at location $x$ in $l$ ).  We say that a loop sequence $s'$ is obtained from deformations of $s$ provided that exactly one component of $s'$ arises from deformation of one loop in $s$.  
	See  \cite[Fig.~6, Fig.~7]{Cha} and \cite[Fig.~4, Fig.~5]{Jafar} for graphical illustrations of positive and negative deformations.
	
	The sets $\mD^{+}(s)$ and $\mD^{-}(s)$ consist of all loop sequences obtained from positive or negative deformations of $s$, respectively. 
	\item	\label{expan} A positive expansion of $l$ at location $x$ by a plaquette  $p$ passing through $e^{-1}$ replaces $l$ with the pair of loops $(l,p)$.  A negative expansion of $l$ at location $x$ by a plaquette  $p$ passing through $e$ replaces $l$ with the pair of loops $(l,p)$.  See   \cite[Fig.~8, Fig.~9]{Jafar} for graphical illustrations of positive and negative expansions.
	The sets $\mE^{+}(s)$ and $\mE^{-}(s)$ consist of all loop sequences obtained from positive or negative expansions of $s$, respectively.
\end{enumerate}

In preparation for an application of the It\^o formula, we recall a convenient matrix analogue of It\^o differentials.  To treat each of the three groups $G$ in a unified way, we introduce parameters $\lambda$, $\nu$, and $\mu$ as follows.  For $G \in \{SO(N),U(N),SU(N)\}$, we rewrite \eqref{e:BB} as
\begin{equ} \label{eq:ItoDif}
	\dif B^{ij}\dif B^{k\ell} \eqdef \dif \< B^{ij} ,B^{k\ell}\> = \big ( \lambda \delta_{i\ell} \delta_{jk} \, \,\,+\nu \delta_{ij}\delta_{k\ell} \, \,\,+\mu \delta_{ik}\delta_{j \ell}\big)\ \dif t\;,
\end{equ}
where $\lambda ,\mu ,\nu$
depend on $G$ and the choice of the inner product. 
Since we will apply It\^o formula to  products of matrices, we will use a matrix variant of the standard $\dif B\dif B$ notation 
for formulating It\^o's rule.  Given a matrix $M$, we use the shorthand $\dif B M$ or $\dif B M \dif B$, which should always be understood by writing it in components as a matrix product and in the latter case applying \eqref{e:BB}.  This leads us to two useful identities which will be crucial in the proof of Theorem \ref{theo:main}. 

\medskip

Given adapted matrix-valued processes $M,N$, we have the following two identities
	\begin{align}
		\dif B M \dif B &=\big (  \lambda \tr M  \, \,\,+\nu M  \, \,\,+\mu M^{t} \big ) \dif t \label{eq:M}. \\
		\text{Tr}(\dif B M)\text{Tr}(\dif B N)&=\Big ( \lambda \text{Tr}(MN)+\nu\text{Tr}(M)\text{Tr}(N)+\mu\text{Tr}(MN^{t}) \Big ) \dif t \label{eq:MN}. 
	\end{align}
The first follows from \eqref{eq:ItoDif} by fixing components $i, \ell$ and writing
	\begin{align}
		(\dif B M \dif B)^{i\ell }=\sum_{j,k} \dif B^{ij}M^{jk} \dif B^{k\ell }
		&=\sum_{j,k}M^{jk} \big ( \lambda \delta_{i\ell} \delta_{jk}+ \nu \delta_{ij}\delta_{k\ell}+\mu \delta_{ik}\delta_{j \ell} \big)\ \dif t\;,  \nonumber \\
		&=\big ( \lambda \delta_{i \ell}\text{Tr}M +\nu M^{i \ell}+\mu M^{\ell i} \big ) \dif t \nonumber.
	\end{align}
The second follows in a similar way, since
	\begin{equs}
		\text{Tr}(\dif B M)\text{Tr}(\dif B N)
		&=\sum_{i,j,k,\ell}\dif B^{ij}M^{ji}\dif B^{k\ell}N^{\ell k} 
		=\sum_{i,j,k,\ell} M^{ji} N^{\ell k} 
		\big ( \lambda \delta_{i\ell} \delta_{jk} +\nu \delta_{ij}\delta_{k\ell} +\mu \delta_{ik}\delta_{j \ell}\big)\ \dif t  
		 \\
		&=\Big ( \lambda \text{Tr}(MN)+\nu\text{Tr}(M)\text{Tr}(N)+\mu\text{Tr}(MN^{t}) \Big ) \dif t \;.	
	\end{equs}
We remark that the identities \eqref{eq:M}-\eqref{eq:MN}
are not new, and are sometimes called the ``magic formulas'', see e.g. \cite[Lemma~7.1]{Dah2022II} for more background and literature, or \cite[Lemma~4.1]{MR2494192}.

We now turn to the proof of the main theorem.
\begin{proof}[Proof of Theorem \ref{theo:main}] 
	 Let $s$ be a string with minimal representation $(l_{1},\dots,l_{m})$, then we may view the quantity $\phi(s)$ as the mean of the stationary It\^o process $N^{-m} \Pi_{i=1}^mW_{l_i}$.  For each constituent loop $l \in \{ l_{1},\dots, l_{m} \}$, there exist edges (depending on $l$) labelled $e_{1},\dots, e_{n}\in E$ such that $W_{l}=\text{Tr}(Q_{e_{1}}\cdots Q_{e_{n}})$. Applying It\^o's product rule with \eqref{eq:YM} and \eqref{eq:YM*} yields
\begin{equs}[ME11]
\dif W_l
\;=\; \Big ( \frac{c_\mfg }{2} |l|W_{l}+ \mathcal{D}_{l}+\cI_{l}  \Big ) \dif t+\dif M_{l}, 
\end{equs}
where we define
\begin{align}
\mathcal{D}_{l} &\; \eqdef \; \sum_{x=1}^{n} 
\frac12 \Tr \Big( \prod_{i=1}^{x-1}  Q_{e_{i}}   \Big [ \nabla \cS(Q)_{e_{x}} \1_{e_x\in E^+}+ ( \nabla \cS(Q)_{e_{x}^{-1}})^* \1_{e_x^{-1}\in E^+}\Big ] \prod_{i=x+1}^{n}Q_{e_{i}} \Big ),  \label {eq:DDef} \\
\dif M_{l} & \; \eqdef \;  \sum_{x=1}^{n} 
 \Tr \Big( \prod_{i=1}^{x-1}  Q_{e_{i}}   \Big [ \dif B_{e_{x}}Q_{e_x}  \1_{e_x\in E^+}+Q_{e_x}\dif B_{e_{x}^{-1}}^* \1_{e_x^{-1}\in E^+}\Big ] \prod_{i=x+1}^{n}Q_{e_{i}} \Big ) , \nonumber\\
\cI_{l}\dif t &\; \eqdef \;
\sum_{x<y}
\, \Tr\big(Q_{a}\,\dif Q_{e_{x}}Q_{b}\,\dif Q_{e_{y}}Q_{c}\big),  \label{e:Idef}
\end{align}
and recall that $e_{x}\in E^{-}  \Leftrightarrow e_{x}^{-1} \in E^{+}$  (with the usual convention used above and below that an empty product of matrices 
 is $I_N$). 
In the definition of $\cI_{l}$ we use the shorthand notation 
$$
Q_{a}\eqdef \prod_{i=1}^{x-1}Q_{e_{i}} , \quad
Q_{b}\eqdef \prod_{i=x+1}^{y-1}Q_{e_{i}}, \quad
Q_{c}\eqdef \prod_{i=y+1}^{n}Q_{e_{i}}
$$
 and omit the dependence of these quantities on $x$ and $y$. 
 
Below, in Step \ref{MM1} we write $\cI_{l}$ in terms of the splitting and twisting operations, leading to the identity \eqref{eq:Il}.  At this stage we have the dynamic for each fixed loop in $s$ and now want to analyze $W_{s}$, which is itself a product of It\^o processes, so we apply It\^o's product rule again and obtain
\begin{align}\label{ME10}
\dif W_{s}=\dif  \big( \Pi_{i=1}^mW_{l_i} \big) 
&=\sum_{i=1}^m\dif  W_{l_i} \Pi_{j\neq i}W_{l_j} +\cI_{s}\dif t \nonumber \\
&= \frac{c_\mfg }{2} |s| W_{s}\dif t+\sum_{i=1}^{m} \big (  \cI_{l_{i}}+\mathcal{D}_{l_{i}} \big )\Pi_{j\neq i}W_{l_j} \dif t+\cI_{s} \dif t+\dif M_{s} , 
\end{align}
where $M_{s}$ is a martingale
and $\cI_{s}$ denotes the It\^o correction defined by
\begin{equation}
\cI_{s} \dif t \; \eqdef \; \sum_{i<j} 
	\dif W_{l_i} \dif W_{l_j} \Pi_{k\neq i,j} W_{l_k}, \nonumber
\end{equation}
which is calculated in Step \ref{MM2} in terms of the merger operation, leading to the identity \eqref{eq:Is}.  In Step \ref{MM3}, we express $\mathcal{D}_{l}$ in terms of the deformation and expansion operations using the expression \eqref{e:DS} for the drift $\nabla \cS$, leading to the identity \eqref{eq:d}. Next, we normalize $W_{s}$ by dividing \eqref{ME10} by $N^{m}$, taking expectation on both sides, and using stationarity to obtain
\begin{equation}
-\frac{c_\mfg }{2} |s| \phi(s)=\frac{1}{N^{m}}\E \Big [ \sum_{i=1}^{m} \big (  \cI_{l_{i}}+\mathcal{D}_{l_{i}} \big )\Pi_{j\neq i}W_{l_j} +\cI_{s} \Big ] .\label{e:FS}
\end{equation}
In the final step, we consider each particular Lie group, specifying $c_\mfg$ according to \eqref{e:c_g}, the parameters $\lambda, \mu, \nu$ as in  \eqref{e:BB}, and use the output of Steps \ref{MM1}-\ref{MM3} to show that the RHS of \eqref{e:FS} can be closed in terms of $\phi$, leading to the master equations \eqref{e:master}-\eqref{eq:phi1UN}.

	\newcounter{MM} 
	\refstepcounter{MM} 
	
	\medskip
	
{\sc Step} \arabic{MM}.\label{MM1}\refstepcounter{MM}
	In this step we analyze an individual loop $W_{l}$ and argue that 
\begin{equation}\label{eq:Il}
		\cI_{l}
	=-\frac\nu2(|l|-\ell(l))W_{l}
	-\frac\lambda2\sum_{s'\in \mS^-(l)}W_{s'}
	 +\frac\lambda2\sum_{s'\in \mS^+(l) }W_{s'}
	 +\frac\mu2\sum_{l'\in \mT^{-}(l)}W_{l'}
	 -\frac\mu2\sum_{l'\in \mT^{+}(l)}W_{l'}  
	\end{equation}
where $\ell(l)$ is as in \eqref{e:ell} (with $m=1$ there).
To prove the claim, we apply \eqref{eq:YM}+\eqref{eq:YM*}
to \eqref{e:Idef}. Since $(B_e)_{e\in E_\Lambda^+}$
 are independent, the contribution to \eqref{e:Idef} is restricted to $x,y$ with the property that $e_{y}=e_{x}$ or $e_{y}=e_{x}^{-1}$.  Since the dynamics in \eqref{ME11} also depends on the orientation of the edge, we sub-divide each case into two further cases $e_{x} \in E^{+}$ and $e_{x}^{-1}\in E^{+}$ to obtain the identity 
\begin{equ}[e:I4]
\cI_{l}\dif t
=\sum_{x<y}
\Big( J_{l}^{(1)} \1_{e_{y}=e_{x}\in E^+} 
 + J_{l}^{(2)} \1_{e_{y}^{-1}=e_{x}^{-1}\in E^+} 
+ J_{l}^{(3)}  \1_{e_{y}^{-1}=e_{x}\in E^+}  
+ J_{l}^{(4)} \1_{e_{y}=e_{x}^{-1}\in E^+}   \Big) ,
\end{equ}
where $ J_{l}^{(i)}, i=1, 2, 3, 4,$ are defined by
\footnote{When $e \in E^-$, \eqref{eq:YM*} yields
$\dif Q_e = \dif Q_{e^{-1}}^*  = (\cdots)+Q_{e^{-1}}^* \dif B_{e^{-1}}^*= (\cdots)+ Q_e \dif B_{e^{-1}}^*$ where $(\cdots)$ is the drift.}
\begin{equs}[e:J1234]
J_l^{(1)}\dif t & \eqdef
\Tr\Big(
Q_{a} \big( \dif B_{e_{x}}Q_{e_{x}} \big)  Q_{b} \big( \dif B_{e_{y}}Q_{e_{y}}\big) Q_{c}
\Big)\;,
\\
J_l^{(2)}\dif t &\eqdef 
 \Tr\Big(
 Q_{a} \big( Q_{e_{x}}\dif B_{e_{x}^{-1}}^* \big) Q_{b} 
 \big( Q_{e_{y}}\dif B_{e_{y}^{-1}}^* \big) Q_{c}\Big) \;,
 \\
J_l^{(3)} \dif t &\eqdef 
\Tr\Big(Q_{a}\big(\dif B_{e_{x}}Q_{e_{x}} \big) Q_{b} 
\big(Q_{e_{y}}\dif B_{e_{y}^{-1}}^{*} \big)Q_{c}\Big)  \;,
\\
J_l^{(4)} \dif t&\eqdef
\Tr\Big(Q_{a} \big(Q_{e_{x}}\dif B_{e_{x}^{-1}}^* \big)Q_{b} 
\big(\dif B_{e_{y}} Q_{e_{y}}\big) Q_{c}\Big) .
\end{equs}
Recall that $B_e^*=-B_e$. To analyze each term, we will apply \eqref{eq:M} with a suitable choice of $M$ and $B$, while taking into account the relation between $e_{x}$ and $e_{y}$ imposed by the indicator function according to \eqref{e:I4}.  
For $J_{l}^{(1)}$ and $J_{l}^{(2)}$, the role of $M$ is played by $Q_{e_{x}}Q_{b}$ and $Q_{b}Q_{e_{y}}$ respectively, while the role of $B$ is played by $B_{e_{x}}$ and $B_{e_{x}^{-1}}^{*}=-B_{e_{x}^{-1}}$ leading to 
\begin{equ}
J_l^{(1)} = J_l^{(2)}
=\lambda \text{Tr}\big(Q_{e_{x}}Q_{b}\big)\text{Tr}\big(Q_{a}Q_{e_{x}}Q_{c} \big)
+\nu \text{Tr}\big(Q_{a}Q_{e_{x}}Q_{b}Q_{e_{x}}Q_{c} \big)
 +\mu \text{Tr}\big(Q_{a} Q_{b^{-1}} Q_{c} \big)  \;,
\end{equ}
where we note that $\mu\neq 0$ only if $G=SO(N)$ in which case $Q^*=Q^t$. 
In a similar way, we obtain
\begin{equ}
J_l^{(3)}  = J_l^{(4)}
=-\lambda \text{Tr}\big( Q_{b} \big)\text{Tr}\big(Q_{a}Q_{c}\big)
	-\nu \text{Tr}\big(Q_{a}Q_{e_{x}} Q_{b} Q_{e_{x}^{-1}}Q_{c} \big)-\mu \text{Tr}\big(Q_{a}Q_{e_{x}} Q_{b^{-1}} Q_{e_{x}^{-1}}Q_{c} \big) ,
\end{equ}
 where we used cyclic invariance of the trace and $Q_{e_{x}}Q_{e_{y}}=Q_{e_{x}}Q_{e_{x}}^*=I_N$.

From the definition of splitting terms in \ref{split}, the terms above with a coefficient $\lambda$ contribute the splitting terms in \eqref{eq:Il}.  Indeed, recall that after choosing a closed path in $l$ and writing the loop as $l=e_{1}\cdots e_{n}$, the sets $\mS^{+}(l) $ and $\mS^{-}(l)$ consist of all loop sequences $s'=(\times_{x,y}^{1}l,\times_{x,y}^{2}l)$ where the locations $x \neq y$ have the property that $e_{y}=e_{x}$ or $e_{y}=e_{x}^{-1}$ respectively, according to whether the splitting is positive or negative.  Note that in the calculation above $x<y$, however if we let $s'':=(\times_{y,x}^{1}l,\times_{y,x}^{2}l)$, then $W_{s'}=W_{s''}$. 
Here, we recall from the discussion in \cite[Sec~2.2]{Cha} (below definitions of $\mS^-,\mS^+$ therein) that
if a loop $l$ has a splitting at $x$ and $y$, then it also has a splitting at $y$ and $x$, and they are reverse of each other, and should be counted as two distinct splittings of $l$. This gives the coefficient $\lambda/2$ before the splitting terms.
The same applies to  twisting and we will keep this in mind below.

The sets $\mT^{+}(l) $ and $\mT^{-}(l)$ consist of all loops of the form $\propto_{x,y} l$ where the locations $x \neq y$ have the property that $e_{y}=e_{x}$ or $e_{y}=e_{x}^{-1}$ respectively, according to whether the twisting is positive or negative.  Hence, from the definition of the twisting terms \ref{twist}, the terms  with a coefficient $\mu$
 lead to the final two terms in \eqref{eq:Il}.

Finally, we turn to the terms with coefficient $\nu$, and begin by introducing the following notation: for any  edge $e\in E^+$ we let $A(e)$ be the set of locations in $l$ where $e$ occurs and  $B(e)$ be the set of locations in $l$ where $e^{-1}$ occurs.
The terms with coefficient $\nu$ are given by 
\begin{equs}
	 &\nu W_{l}\sum_{x<y} \Big( \1_{e_{x}=e_{y}} -\1_{e_{x}=e_{y}^{-1}} \Big)
	\\&=  \nu W_l\sum_{x<y}
	\sum_{e\in E^+} \Big(\1_{e_x=e}\1_{e_y=e}+\1_{e_x=e^{-1}}\1_{e_y=e^{-1}}
	-  \1_{e_x=e}\1_{e_y=e^{-1}} - \1_{e_x=e^{-1}}\1_{e_y=e}\Big)
	\\
	&=\nu W_l\sum_{e\in E^+}\Big(\frac12|A(e)|(|A(e)|-1)+\frac12|B(e)|(|B(e)|-1)-|A(e)||B(e)|\Big)
\\&=-\frac \nu 2W_l\sum_{e\in E^+}\Big(|A(e)|+|B(e)|-(|A(e)|-|B(e)|)^2\Big)=-\frac{\nu}2 W_l(|l|-\ell(l)),
\end{equs}
with $\ell(l)=\sum_{e\in E^+}(|A(e)|-|B(e)|)^2$, which is precisely the first term in \eqref{eq:Il}, completing the proof of the claim. 
	
	\medskip

	{\sc Step} \arabic{MM}.\label{MM2}\refstepcounter{MM}
	In this step we consider the $m$ constituent Wilson loops $W_{l_{i}}$ in $W_{s}$ and argue that 
\begin{equation}
	\aligned
		\cI_{s}& = 
		\frac{\lambda}{2} \!\!\sum_{s'\in \mM^+_{U}(s) } \!\! W_{s'}
		-\frac{\mu}{2} \!\!\!\! \sum_{s'\in \mM^{+}(s) \setminus \mM^+_{U}(s) }\!\!\!\! W_{s'}
		-\frac{\lambda}{2}\!\! \sum_{s'\in \mM^-_{U}(s) }\!\! W_{s'}
		+\frac{\mu}{2} \!\!\!\! \sum_{s'\in \mM^{-}(s) \setminus \mM^-_{U}(s) }\!\!\!\! W_{s'} 
		\\&\qquad\qquad\qquad\qquad\qquad\qquad\qquad +\nu \sum_{i<j}\sum_{e\in E^+}t_i(e)t_j(e)W_s\;.
		\endaligned\label{eq:Is}
	   \end{equation}

To analyze $\cI_{s}$,  we start by fixing two loops $l_{i}$ and $l_{j}$ with $i \neq j$, and analyze $\dif W_{l_{i}}\dif W_{l_{j}}$.  First we choose a path to represent each loop and write them as
	$$l_i=\Pi_{k=1}^{|l_i|}e_k^i=ae_{x}^ib,\qquad l_j=\Pi_{k=1}^{|l_j|}e_k^j=ce_{y}^jd,$$
	where we use a shorthand notation
	$$a \eqdef \Pi_{k=1}^{x-1}e_k^i,\qquad b\eqdef\Pi_{k=x+1}^{|l_i|}e_k^i,\qquad c\eqdef\Pi_{k=1}^{y-1}e_k^j,\qquad d=\Pi_{k=y+1}^{|l_j|}e_k^j.$$
	Using again the independence of edges, taking into account \eqref{ME11} we obtain 	
	\begin{align}
	\dif W_{l_{i}}\dif W_{l_{j}} =\sum_{x=1}^{|l_{i}| }\sum_{y=1}^{|l_{j}| } (\1_{e_{y}^{j}=e_{x}^{i} }+\1_{e_{y}^j=(e_{x}^{i})^{-1}} )\tr(Q_a\dif Q_{e_x^{i}}Q_b)\tr(Q_c\dif Q_{e_y^{j} }Q_d). \nonumber
		\end{align}
	 To ease the notation below, we will drop the superscript from the edge and simply write $e_x=e_x^i$ and $e_y=e_y^j$. 
Apply \eqref{eq:YM}+\eqref{eq:YM*} to the r.h.s. of the above equation.
 Using $B_{e}^{*}=-B_{e}$ and cyclic invariance of the trace, we may  re-write the r.h.s. above as
		\begin{align}
		\sum_{x=1}^{|l_{i}| }\sum_{y=1}^{|l_{j}| } \Big ( \1_{e_{y}=e_{x}\in E^+}J_s^{(1)}+\1_{e_{y}^{-1}=e_{x}^{-1}\in E^+}J_s^{(2)}+\1_{e_{y}^{-1}=e_{x}\in E^+}J_s^{(3)}+\1_{e_{y}=e_{x}^{-1}\in E^+}J_s^{(4)} \Big )\label{ssz1}
	\end{align}
where $J_s^{(i)}$  are defined by
\begin{align*}
	 J_s^{(1)}\dif t &\eqdef \,
	 \tr\big(\dif B_{e_{x}} Q_{e_x}Q_bQ_a\big)
	 \tr(\dif B_{e_{y}}Q_{e_y}Q_dQ_c\big),
	 	\\
	J_s^{(2)}\dif t &\eqdef \,
	\tr\big(\dif B_{e_{x}^{-1}} Q_bQ_aQ_{e_x}\big)
	\tr\big(\dif B_{e_{y}^{-1} } Q_dQ_cQ_{e_y}\big),
	\\
	 J_s^{(3)}\dif t &\eqdef \,
	 -\tr\big(\dif B_{e_{x}} Q_{e_x}Q_bQ_a\big)
	 \tr\big(\dif B_{e_{y}^{-1} } Q_dQ_cQ_{e_y }\big),
	 \\
	 J_s^{(4)}\dif t \, &\eqdef \,
	 - \tr\big(\dif B_{e_{x}^{-1} } Q_bQ_aQ_{e_x}\big)
	 \tr\big(\dif B_{e_{y}}Q_{e_y}Q_dQ_c\big) .
	\end{align*}
We calculate these terms similarly as in Step~\ref{MM1}
using cyclic invariance of the trace, $Q_{e_{x}}Q_{e_{x}}^*=I_N$,
 the fact that $\mu\neq 0$ only if $G=SO(N)$ in which case $Q_{e_{x}}^*=Q_{e_{x}}^t$,
and taking into account the relation between $e_{x}$ and $e_{y}$ imposed by the indicator functions.

Applying \eqref{eq:MN} with $M=Q_{e_x}Q_bQ_a$ and $N=Q_{e_y}Q_dQ_c$
for $J_s^{(1)} $ and with $M=Q_bQ_aQ_{e_x}$ and $N=Q_dQ_cQ_{e_y}$ for $J_s^{(2)}$, under the assumption $e_x=e_y$ we obtain
	\begin{equ}
		J_s^{(1)} =J_s^{(2)} 
		=\lambda\tr\big( Q_a Q_{e_x}Q_dQ_cQ_{e_x}Q_b\big)
		+\nu\tr\big(Q_{l_i}\big)\tr\big(Q_{l_j}\big)
		+\mu\tr\big(Q_aQ_{c^{-1}}Q_{d^{-1}}Q_b\big).
	\end{equ}
Similarly, applying \eqref{eq:MN} with $M=Q_{e_x}Q_bQ_a$, $N=Q_dQ_cQ_{e_y}$ 
and $M=Q_bQ_aQ_{e_x}$, $N=Q_{e_y}Q_dQ_c$,
under the assumption $e_x=e_y^{-1}$ we obtain
\begin{equ}
	J_s^{(3)} = J_s^{(4)}
 =-\lambda\tr \big( Q_a Q_dQ_cQ_b\big)
-\nu\tr \big(Q_{l_i} \big)\tr \big(Q_{l_j} \big)
-\mu\tr\big(Q_aQ_{e_x}Q_{c^{-1}}Q_{d^{-1}}Q_{e_x}Q_b \big).
\end{equ}

We first note that for $s=(l_{1},\dots,l_{m} )$,  the set $\mM^+(s)$ is the collection of $s'$ which can be obtained from $s$ by merging some $l_{i}$ and $l_{j}$ with $i \neq j$.  If $l_{i}$ and $l_{j}$ can be merged at locations $x$ and $y$ respectively, then $s',s''$ defined by $s'=(l_{1},\dots, l_{i-1}, l_{i}\oplus_{x,y} l_j , l_{i},\dots,l_{m})$ and $s''=(l_{1},\dots, l_{j-1}, l_{i}\oplus_{y,x} l_j , l_{j},\dots,l_{m})$ both belong to $\mM^+(s)$.   Since $i<j$ we need to account for both contributions, leading to the factors of $1/2$ in \eqref{eq:Is}.  An analogous characterization holds for $\mM_{U}^+(s) , \mM^-(s),$ and $\mM_{U}^-(s)$.

To find the  contribution of  the terms with coefficient $\mu$
 to $\cI_{s}$, we recall the definition of mergers terms in \ref{merger} and multiply by $\prod_{k \neq i,j }W_{l_{j}}$ and sum over $i<j$ to obtain
\begin{align}
	&\mu \sum_{i<j}\sum_{x=1}^{|l_{i}| }\sum_{y=1}^{|l_{j}| } \Big (\1_{e_{y}=e_{x} } W_{l_{i}\ominus_{x,y} l_j }-\1_{e_{y}=e_{x}^{-1} } W_{l_{i}\oplus_{x,y} l_j } \Big )\prod_{k \neq i,j }W_{l_{j}} \nonumber \\
&=\frac{\mu}{2} \sum_{s'\in \mM^{-}(s) \setminus \mM^-_{U}(s) }W_{s'} -\frac{\mu}{2} \sum_{s'\in \mM^{+}(s) \setminus \mM^+_{U}(s) }W_{s'}\nonumber.
\end{align}
Similarly, collecting the terms with coefficient $\lambda$, multiplying by $\prod_{k \neq i,j }W_{l_{j}}$ and summing over $i<j$ yields
\begin{equation}
	\lambda \sum_{i<j}\sum_{x=1}^{|l_{i}| }\sum_{y=1}^{|l_{j}| } \Big (\1_{e_{y}=e_{x} } W_{l_{i}\oplus_{x,y} l_j }-\1_{e_{y}=e_{x}^{-1} } W_{l_{i}\ominus_{x,y} l_j } \Big )\prod_{k \neq i,j }W_{l_{j}}= \frac{\lambda}{2} \sum_{s'\in \mM^+_{U}(s) }W_{s'}-\frac{\lambda}{2} \sum_{s'\in \mM^-_{U}(s) }W_{s'}. \nonumber
\end{equation}		

Finally, we turn to the terms with coefficient $\nu$, and begin by introducing the following notation: for any  edge $e\in E^+$ we let $A_r(e)$ be the set of locations in $l_r$ where $e$ occurs and  $B_r(e)$ be the set of locations in $l_r$ where $e^{-1}$ occurs.
The terms with coefficient $\nu$ are given by $\sum_{x,y}$ of
\begin{equs}
	& \nu W_{l_i}W_{l_j}   \Big(  \1_{e_{x}=e_{y}}-\1_{e_{x}=e_{y}^{-1}} \Big)
	\\&= 
	\nu W_{l_i}W_{l_j}\sum_{e\in E^+} \Big(\1_{e_x=e}\1_{e_y=e}+\1_{e_x=e^{-1}}\1_{e_y=e^{-1}}
	-  \1_{e_x=e}\1_{e_y=e^{-1}}-\1_{e_x=e^{-1}}\1_{e_y=e}\Big).
\end{equs}
Summing over $x,y$ we obtain
\begin{align*}
	&\nu W_{l_i}W_{l_j}\sum_{e\in E^+}\Big(|A_i(e)||A_j(e)|+|B_i(e)||B_j(e)|-|A_i(e)||B_j(e)|-|B_i(e)||A_j(e)|\Big)
	\\&= \nu W_{l_i}W_{l_j} \sum_{e\in E^+}\Big((|A_i(e)|-|B_i(e)|)(|A_j(e)|-|B_j(e)|)\Big)=\nu W_{l_i}W_{l_j}\sum_{e\in E^+} t_i(e)t_j(e),
\end{align*}
with $t_i(e)=|A_i(e)|-|B_i(e)|$.
 Substituting the $\nu$ part into $\cI_s$, we get the last term in \eqref{eq:Is}.

	\medskip
	
	{\sc Step} \arabic{MM}.\label{MM3}\refstepcounter{MM}
	In this step, we analyze the gradient terms $\mathcal{D}_{l}$ and claim that 
	\begin{equation}
	\mathcal{D}_{l}=
	\begin{cases}
	-\frac14N\beta\sum_{s'  \in \mD^+(l) }W_{s'}\quad \quad \, \,\,  +\frac14N\beta\sum_{s' \in \mD^-(l) }W_{s'}  &\quad \text{for} \quad G \in \{SO(N), U(N) \}\label{eq:d} \\  
	-\frac14N\beta\sum_{s' \in \mD^+(l) \cup \mE^+(l) }W_{s'}+\frac14N\beta\sum_{s' \in \mD^-(l ) \cup \mE^-(l) }W_{s'} & \quad \text{for} \quad G=SU(N) 
	\end{cases}
	\end{equation}	
	Recalling $\frac12\nabla\cS(Q)_e$ given in \eqref{e:DS},
we first claim that for each of our Lie groups, and for every $e\in E^-$,
\begin{equ}[e:DSDS]
\frac12 (\nabla S(Q))_{e^{-1}}^* = \frac12	\nabla S(Q)_{e}
\end{equ}
where the r.h.s. is given by the formula  \eqref{e:DS}.
Indeed, to calculate the l.h.s. of \eqref{e:DSDS}, note that
$$
\sum_{p\in \cP_{\Lambda},p\succ e^{-1}}
\!\!\! \Big((Q_p-Q_p^*)Q_{e^{-1}}\Big)^*
=
\sum_{p\in \cP_{\Lambda},p\succ e^{-1}}
\!\!\! Q_e (Q_p^* -Q_p)
=
\sum_{\bar p\in \cP_{\Lambda}, \bar p\succ e}
\! (Q_{\bar p}-Q_{\bar p}^*)Q_{e}
$$
where in the last step
we made a change of variable
$p = e^{-1}e_2e_3e_4
\mapsto 
\bar p = e\, e_4^{-1}e_3^{-1}e_2^{-1}$.  This establishes \eqref{e:DSDS} in case where $G \in \{ SO(N),U(N)\}$.  For the $SU(N)$ case we need to analyze the additional trace term, so noting that $\tr(Q_p-{Q}_p^{*}) $ is purely imaginary,
\begin{equs}
\sum_{p\in \cP_{\Lambda},p\succ e^{-1}} \!\!\!
\Big(\tr(Q_p-{Q}_p^{*}) Q_{e^{-1}}\Big)^*
&=
-  \!\!\! \!\!\! \sum_{p\in \cP_{\Lambda},p\succ e^{-1}} \!\!\!
\tr(Q_p-{Q}_p^{*}) Q_{e}
\\
&=
-  \!\!\! \! \sum_{\bar p\in \cP_{\Lambda}, \bar p\succ e} 
\tr(Q_{\bar p}^{-1}  - Q_{\bar p}) Q_{e}
=\!\!
\sum_{\bar p\in \cP_{\Lambda}, \bar p\succ e}\!\!
\tr(Q_{\bar p} - Q_{\bar p}^*) Q_{e}
\end{equs}
so \eqref{e:DSDS} holds.  In light of \eqref{e:DSDS}, the constraint in \eqref{eq:DDef} on the orientation of the edge may be removed, and the expression for $\mathcal{D}_{l}$ simplifies to
\begin{equ}
\mathcal{D}_{l}=\frac{1}{2}\sum_{x=1}^{n} 
\Tr \Big( \prod_{i=1}^{x-1}  Q_{e_{i}} 
  \nabla \cS(Q)_{e_{x}} 
 \prod_{i=x+1}^{n}Q_{e_{i}} \Big ). 
\end{equ}
In light of \eqref{e:DS}, in the case $G \in \{SO(N),U(N)\}$, we find that $\mathcal{D}_{l}$ is given by
\begin{equ}
-\frac14N\beta 
\sum_{x=1}^{n} \sum_{p\in \cP_{\Lambda}, p\succ e_x}
\Tr \Big( \prod_{i=1}^{x-1}  Q_{e_{i}} 
	\Big(Q_p-Q_{p}^{-1}\Big)Q_{e_x}  
 \prod_{i=x+1}^{n}Q_{e_{i}} \Big ) 
=
-\frac14N\beta \sum_{x=1}^n\sum_{p\succ e_x}
\Big(W_{l\oplus_xp}-W_{l\ominus_xp}\Big).
\end{equ}
which yields the first case in \eqref{eq:d} taking into account that each $s' \in \mD^+(l)$ is of the form $l\oplus_xp$ for a plaquette $p$ which contains an edge $e_{x}$ and analogously for $s' \in \mD^-(l)$.  Note that here and below, in the cycle $p$ we choose the path where $e_{x}$ is the first edge.

For the $SU(N)$ case, the quantity $\mathcal{D}_{l}$ is given by adding to the quantity above the term
\begin{align}\label{eq:sus}
\frac{\beta}4
\sum_{x=1}^{n} \sum_{p\in \cP_{\Lambda}, p\succ e_x}
\Tr \Big( 
 \prod_{i=1}^{n}Q_{e_{i}} \Big ) 
 \tr\Big(Q_p-Q_{p}^{-1}\Big)
 =
 \frac{\beta}4
\sum_{x=1}^{n} \sum_{p\in \cP_{\Lambda}, p\succ e_x}
 \Big( W_l W_p - W_l W_{p^{-1}}
\Big)\;,
\end{align}
which yields the second case in \eqref{eq:d} taking into account that each $s' \in \mE^-(l)$ is of the form $s'=(l,p)$ for a plaquette $p$ which contains an edge $e_{x}$ and analogously for $s' \in \mE^+(l)$. Here we use $W_{p^{-1}}=W_{\tilde  p}$ with $\tilde p\succ e_x^{-1}$.

{\sc Step} \arabic{MM}.\label{MM4}\refstepcounter{MM}
In this final step, we consider each group $G \in \{SO(N),U(N),SU(N)\}$ and use Steps \ref{MM1}-\ref{MM3} to conclude the proof of the master equations \eqref{e:master}-\eqref{eq:phi1UN}.  To this end, we use the identity \eqref{eq:Il} for each constituent loop $l_{i}$ and insert it, together with \eqref{eq:Is} and \eqref{eq:d} into \eqref{e:FS}.  We note in advance that  
\begin{equation}
	N^{-m}\sum_{i=1}^{m}\sum_{s'\in \mathcal{O}(l_{i} ) }W_{s'}\prod_{j \neq i }W_{l_{j}}=N^{-m}\sum_{s' \in \mathcal{O}(s) } W_{s'}  \label{eq:s4}
\end{equation}
for each of the operations $\mathcal{O}\in \{\mS^\pm,\mT^\pm,\mD^\pm, \mE^\pm \}$.  Depending on the operation $\mathcal{O}$, the quantity $W_{s'}$ on the r.h.s. of \eqref{eq:s4} should be normalized in one of three possible ways according to the following:
\minilab{eq:ss}
\begin{equs}
s' \in \{ \mM^+(s), \mM^-(s)\} \quad & \rightarrow \quad \frac{\E W_{s'}}{N^{m-1} }  =\phi(s'). \label{eq:s3} 
\\
s' \in \{\mD^+(s), \mD^-(s), \mT^+(s), \mT^-(s)\} \quad  &\rightarrow \, \, \quad  \frac{\E W_{s'}}{N^{m} }  =\phi(s'). \label{eq:s1} 
\\
s' \in \{\mS^+(s), \mS^-(s), \mE^+(s), \mE^-(s)\} \quad  &\rightarrow \,  \quad  \frac{\E W_{s'}}{N^{m+1} } =\phi(s'). \label{eq:s2}
\end{equs}  
This follows since we have $m-1$ loops in $W_{s'}$ for $s' \in \{\mM^+(s), \mM^-(s)\}$, $m$ loops in $W_{s'}$ for $s'\in\{\mD^+(s),\mD^-(s),, \mT^+(s), \mT^-(s)\}$, and $m+1$ loops in $W_{s'}$ for $s'\in \{\mS^+(s), \mS^-(s)\}$.  Also we have an extra $W_p$ for the expansion term, which requires an extra $\frac1N$ in $W_{s'}$ for $s'\in \{\mE^+(s), \mE^-(s)\} $.   

\medskip

We now turn to each of the groups and simplify the r.h.s. of \eqref{e:FS} in accordance with the observations above.

Let $G=SO(N)$. 
By \eqref{e:BB1},  we have $\lambda=-\frac12$, $\mu=\frac12$, $\nu=0$. Since $c_{\so(N)}=-\frac12(N-1)$ from \eqref{e:c_g}, 
the identity \eqref{e:FS} takes the form
\begin{equation}
(N-1)|s|\phi(s)
=\frac{4}{N^{m}}\E \Big [ \sum_{i=1}^{m} ( \mathcal{D}_{l_{i}}  
 + \cI_{l_{i}}) \Pi_{j\neq i}W_{l_j} +\cI_{s} \Big ] \nonumber.
\end{equation}
Substituting \eqref{eq:d} for $\mathcal{D}_{l_{i}}$,
\eqref{eq:Il} for $\cI_{l_{i}}$ and  \eqref{eq:Is}  for $\cI_{s}$,
 the r.h.s. is equal to
\begin{equs}
{} & - N\beta \sum_{s'\in \mathbb{D}^+(s)} \frac{\E[  W_{s'}]}{N^{m}}
 + N\beta \sum_{s'\in \mathbb{D}^-(s)}\frac{\E[ W_{s'}]}{N^{m}} 
 -N \sum_{s'\in \mS^+(s)}\frac{ \E[W_{s'}] }{N^{m+1}}
 + N \sum_{s'\in \mS^-(s)}\frac{\E[W_{s'}] }{N^{m+1}}
 \\
 &\qquad\quad - \sum_{s'\in \mT^+(s)}\frac{\E[W_{s'}] }{N^{m}}
 + \sum_{s'\in \mT^-(s)}\frac{\E[W_{s'}] }{N^{m}} 
-\frac1{N}\sum_{s'\in \mM^+(s)}\frac{ \E[W_{s'}] }{N^{m-1}}
+\frac1{N}\sum_{s'\in \mM^-(s)}\frac{ \E[W_{s'}] }{N^{m-1}} 
\end{equs}
where we used \eqref{eq:s4}.      
Identifying the summands with $\phi$ by \eqref{eq:ss} completes the proof of \eqref{e:master}. 

Let $G=U(N)$. By \eqref{e:BB2},  we have $\lambda=-1$, $\mu=\nu=0$.
Since $c_{\u(N)}=-N$, the identity \eqref{e:FS} takes the form
\begin{equ}
N|s|\phi(s)
=\frac{2}{N^{m}}\E \Big [ \sum_{i=1}^{m} ( \mathcal{D}_{l_{i}}  
 + \cI_{l_{i}}) \Pi_{j\neq i}W_{l_j} +\cI_{s} \Big ] .
\end{equ}
Again substituting \eqref{eq:d}+\eqref{eq:Il}+\eqref{eq:Is} and using \eqref{eq:s4}, the r.h.s.
is equal to
\begin{equs}[e:UN]
{}& -\frac{N\beta}{2}\sum_{s'\in \mathbb{D}^+(s)}\frac{ \E[W_{s'}] }{N^{m}}
+\frac{N\beta}{2} \sum_{s'\in \mathbb{D}^-(s)}\frac{ \E[W_{s'}] }{N^{m}}	
\\
&\quad -\frac1{N}\sum_{s'\in \mM^+_U(s)}\frac{\E[W_{s'}] }{N^{m-1}}
+\frac1{N}\sum_{s'\in \mM^-_U(s)}\frac{ \E[W_{s'}] }{N^{m-1}}
-N \sum_{s'\in \mS^+(s)}\frac{\E[W_{s'}] }{N^{m+1}}
+N\sum_{s'\in \mS^-(s)}\frac{\E[W_{s'}] }{N^{m+1}} \;.
\end{equs}
Identifying the summands with $\phi$ by \eqref{eq:ss} completes the proof of \eqref{eq:phi1UN}.

Let $G=SU(N)$.
By \eqref{e:BB3}, we have $\lambda=-1$, $\mu=0$, $\nu=\frac{1}{N}$. 
Since $c_{\su(N)}=-N+\frac{1}{N}$ by \eqref{e:c_g}, 
the identity \eqref{e:FS} takes the form
\begin{equ}
(N-\frac{1}{N})|s|\phi(s)
=\frac{2}{N^{m}}\E \Big [ \sum_{i=1}^{m}  (\mathcal{D}_{l_{i}}  + \cI_{l_{i}} )\Pi_{j\neq i}W_{l_j} +\cI_{s} \Big ] . 
\end{equ}
We again  apply \eqref{eq:d}+\eqref{eq:Il}+\eqref{eq:Is} using \eqref{eq:s4}, and note that 
the only differences from the $U(N)$ case are the $\nu$-terms and the expansion terms.
The r.h.s. is then equal to 
\begin{equs}[e:SUN]
\eqref{e:UN} &\;  -\frac{N\beta}{2}\sum_{s'\in \mathbb{E}^+(s)}\frac{\E[W_{s'}] }{N^{m+1}}+\frac{N\beta}{2}\sum_{s'\in \mathbb{E}^-(s)}\frac{\E[W_{s'}] }{N^{m+1}}
\\
&-\frac{1}{N}|s|\frac{\E[W_{s}] }{N^{m}}+\frac{1}{N}\sum_{e\in E^+}\Big(\sum_{i=1}^mt_i(e)^2+2\sum_{i<j}t_i(e)t_j(e)\Big)\frac{\E[W_{s}] }{N^{m}}
\end{equs}
where we used $\sum_{i=1}^m |l_i|=|s|$ and $\ell(l_i)=\sum_{e\in E^+}t_i(e)^2$.
By \eqref{e:ell},
the second line of \eqref{e:SUN} is equal to $-\frac{1}{N}(|s|-\ell(s))\phi(s)$.
This implies \eqref{e:jafar} and completes the proof.
\end{proof}

\bibliographystyle{alphaabbr}
\bibliography{refs}

\end{document}